\documentclass{article}

\usepackage[english]{babel}
\usepackage[utf8x]{inputenc}
\usepackage[T1]{fontenc}
\usepackage{lmodern}
\usepackage{latexsym}
\usepackage{amsthm}
\usepackage{a4}
\usepackage{amsfonts}
\usepackage{bm}
\usepackage{amssymb}
\usepackage{subfigure}
\usepackage{eucal}
\usepackage{amscd}
\usepackage{amsmath,mathtools}
\usepackage{array}
\usepackage{float}
\usepackage{setspace}
\usepackage{listings}
\usepackage{color} 
\usepackage{graphicx}
\usepackage{xifthen}
\usepackage{ragged2e}
\usepackage{yfonts}
\usepackage{esint}
\usepackage{comment}
\usepackage[dvipsnames]{xcolor}

\usepackage[colorinlistoftodos]{todonotes}
\setuptodonotes{color=green!40, fancyline, size=small}

\usepackage[top=2.8cm, bottom=2.5cm,
        left=2.1cm, right=1.7cm]{geometry}
\usepackage[open, openlevel=1]{bookmark}

\DeclarePairedDelimiter{\norma}{\lVert}{\rVert}
\DeclareMathOperator{\spn}{span}

\usepackage{csquotes}
\usepackage[backend=bibtex,sorting=none]{biblatex}
\usepackage{amsfonts,amssymb}
\usepackage{nccmath}
\usepackage{comment}

\makeatletter
\renewcommand*\env@matrix[1][\arraystretch]{%
  \edef\arraystretch{#1}%
  \hskip -\arraycolsep
  \let\@ifnextchar\new@ifnextchar
  \array{*\c@MaxMatrixCols c}}
\makeatother

\newcommand{\R}{\mathbb{R}}
\newcommand{\N}{\mathbb{N}}

\newcommand{\K}{\mathbb{K}}
\newcommand{\D}{\mathbb{D}}
\newcommand{\DD}{\mathcal{D}}
\newcommand{\A}{\mathcal{A}}
\newcommand{\eye}{\mathbb{I}}
\newcommand{\B}{\mathcal{B}}

\newcommand{\weakstar}{\overset{\ast}{\rightharpoonup}}
\newcommand{\weak}{\rightharpoonup}

\newcommand{\e}{\varepsilon}

\newcommand{\dx}{\,\text{d}x}

\newcommand{\dt}{\,\text{d}t}
\newcommand{\ds}{\,\text{d}s}

\newcommand{\rsym}{\R^{2\times2}_{sym}}

\newcommand{\F}{\mathcal{F}}
\newcommand{\U}{\mathcal{U}}

\newcommand{\y}{\bar{y}}
\newcommand{\RR}{\bar{R}}

\newcommand{\begstat}{Let $\bar{y}\in\R^3$ and $\bar{R}\in SO(3)$.}

\newcommand{\qstar}{Q^{**}}
\newcommand{\qstart}{Q^{**}_t}

\newcommand{\mob}{M\"obius}

\newcommand{\sign}{\text{sign}}

\newcommand{\innerproduct}[2]{\langle #1, #2 \rangle}

\newtheorem{theorem}{Theorem}[section]

\newtheorem{lemma}{Lemma}[section]

\newtheorem{remark}{Remark}

\newtheoremstyle{exercise}
  {\topsep}   
  {\topsep}   
  {}          
  {}          
  {\bfseries} 
  {.}         
  {.5em}      
  {}          

\theoremstyle{exercise}

\newtheorem{definition}{Definition}[section]

\title{ON THE SADOWSKY FUNCTIONAL FOR ANISOTROPIC RIBBONS}
\author{
{\sc Giovanni Savar\'e}
}
\date{
}

\begin{document}
\maketitle

\begin{abstract}
\noindent The equilibrium shape of a thin, elastic, inextensible ribbon minimizes its bending energy.
It has been shown that, as the width of the ribbon tends to zero, the bending energy $\Gamma$-converges to the so called Sadowsky functional.
In this paper we consider geometrically frustrated anisotropic ribbons with a possibly curved reference configuration. We prove that the $\Gamma$-convergence remains valid under prescribed affine boundary conditions, including, in particular, those satisfied by a Möbius strip.
\end{abstract}
\noindent\textsc{\textbf{Keywords.} Frustrated elastic ribbons, Sadowsky functional, Moebius band $\Gamma$-convergence} 
\vskip5pt
\noindent
\textsc{\textbf{AMS subject classifications.} 49J45, 49S05, 74B20, 74K20}

\section{Introduction}
Thin inextensible ribbons can be modeled as two-dimensional surfaces that are locally isometric to a flat reference configuration. 
The equilibrium shape of a ribbon is a minimiser of the deformation energy of the strip, which is entirely due to bending, as it was first pointed out by Sadowsky in \cite{sadovsky_1}. In \cite{sadovsky_2} Sadowsky raised the question of finding the characteristic equilibrium shape of a \mob{} band, by minimising the bending energy over the class of developable surfaces, subject to the boundary conditions of a \mob{} strip. Since a \mob{} strip can be obtained by glueing together the two short sides of a rectangle after a half twist, these boundary conditions can be expressed as affine constraints on the short sides of a rectangular reference configuration.
If we denote by $S_\e = (0,\ell)\times (-\e/2, \e/2)$ the reference configuration of the ribbon, where $\ell>0$ and $\e>0$ are the length and the width of the strip respectively, then, according to Sadowsky, the bending energy of a deformation $u:S_\e\to\R^3$ is given (up to a multiplicative constant) by

\begin{equation}\label{eq:en_functional}
    E_\e(u) = \frac{1}{\e}\int_{S_\e}Q(\Pi_u(x)-\Pi^{nat}_\e)\dx,
\end{equation}
where $\Pi_u$ is the second fundamental form of $u$, $\Pi^{nat}_\e\in L^2\bigl((0, \ell);\rsym\bigl)$ represents either the second fundamental form of a certain deformation, or a target curvature tensor, and $Q:\rsym\to\R_{\ge0}$ is the bending energy density, which we will always assume to be a positive definite quadratic form. The term $\Pi_\e^{nat}$ allows to include in the analysis geometrically frustrated ribbons or ribbons whose natural configuration is not necessarily flat. Finally, the energy density $Q$ is a general quadratic form accounting for a possible elastic anisotropy of the material.

The isometry constraint on the ribbons reads as $(\nabla u)^T\cdot(\nabla u) = \eye.$ This is a nonlinear relation that makes highly nontrivial to perform variations of $E_\e$ and to compute the Euler-Lagrange equations.
In order to simplify this problem, Sadowsky considered a dimension reduction approach. In \cite{sadovsky_1} he argued that for infinitesimally narrow bands (that is, as $\e\to 0$), the energy $E_\e$ reduces to 
\begin{equation}\label{eq:int_1}
    \hat{E}(y) = \int_{0}^{\ell}\frac{(k^2+\tau^2)^2}{k^2}\ds,
\end{equation}
where $k$ and $\tau$ are, respectively, the curvature and the torsion of the centerline $y$ of the strip, and $s$ is its arc-length. However, Sadowsky did not provide any proof of this result, and he tacitly assumed the centerline of the band to have strictly positive curvature.

Wunderlich in \cite{wunderlinch} gave a formal justification of Sadowsky's claim, showing that, under the assumption of nonvanishing curvature, the energy $\hat{E}$ is in fact the pointwise limit of $E_\e$, as $\e\to 0.$

An other justification of (\ref{eq:int_1}) was proposed by Kirby and Fried in \cite{kirby}. Again under the assumption that the curvature of the centerline is bounded away from zero, they proved that $\hat{E}$ is the $\Gamma$-limit of $E_\e$, as $\e\to 0$, with respect to the strong topology of $W^{3, p}(0, \ell),$ with $p\ge 1$. However, this topology is not the natural one, since compactness of sequences of minimisers fails, and thus the Fundamental Theorem of $\Gamma-$convergence cannot be applied.

In \cite{mora3}, \cite{mora2} and \cite{mora1} Freddi, Hornung, Mora and Paroni gave a rigorous justification of Sadowsky's claim by $\Gamma$-convergence in the natural topology, and showed that the expression of the limit functional (\ref{eq:int_1}) needs to be corrected for small curvatures. 
In \cite{mora2}, Freddi, Hornung, Mora and Paroni studied the $\Gamma-$convergence of the functionals $E_\e$ (after a proper rescaling) defined on the space of isometries belonging to $W^{2, 2}(S_\e;\R^3)$. They considered a general positive definite quadratic form $Q$, and a general flat reference configuration, but they did not enforce any boundary conditions. In this context the energies $E_\e$, properly rescaled, $\Gamma-$converge to the functional 
\begin{equation}\label{eq:limit_functional_first}
    J(y, d_1, d_2, d_3) \coloneqq \int_I\bar{Q}(x_1, d_1'\cdot d_3, d_2'\cdot d_3)\dx_1
\end{equation}
(see (\ref{def:limit_functional})),
defined on the space $\A_0$. The energy density $\bar{Q}$ in (\ref{eq:limit_functional_first}) is defined in (\ref{eq:limit_energy_density}). The elements of the space $\A_0$ are framed curves $(y, (d_1, d_2, d_3))$, where $y$ represents the centerline of the limiting strip, $d_1 = y'$ is the tangent to the centerline, $d_2$ represents the transversal orientation of the strip, and $d_3$ is the normal to the strip. The frame $(d_1, d_2, d_3)$ may be not orthonormal, and is related to the geometry of the reference configuration, see (\ref{eq:limit_space}).

The main goal of this work is to prove that this last more general $\Gamma$-convergence result is stable under prescribed affine boundary conditions. In particular, we require the ``short left side'' of the strip to be clamped, and the ``right short side'' to be isometrically deformed, see Subsection \ref{subsec:boundary_conditions} for a more precise statement. These boundary conditions are the same enforced in \cite{mora1}, where the $\Gamma$-convergence has been studied in the case $Q(M) = |M|^2$ and under the assumption of a rectangular stress-free configuration.

While the liminf inequality and the compactness Lemma easily follow from the corresponding results in \cite{mora2}, the proof of the limsup inequality requires some highly non trivial changes, although the strategy is the same as in \cite{mora1}. The original contribution of this work is contained in Section \ref{sec:refinement}, and is mostly given by Theorems \ref{teo:perturbation} and \ref{teo:approx1}, which give a refinement of the relaxation result \cite{mora2}[Proposition 9]. This result is a fundamental building block for the recovery sequence and is the following. Let
\begin{equation*}
    Q^0(M) = \begin{cases}
        Q(M)&\text{ if }\det(M) = 0,\\
        +\infty & \text{otherwise},
    \end{cases}\quad\text{ and }\quad\hat{\F}:L^2(I;\rsym)\to[0, +\infty],\;\;\hat{\F}(M) = \int_IQ^0(M(x))\dx.
\end{equation*}
In \cite[Proposition 9]{mora2} the authors proved that the lower semicontinuous envelope of $\hat{\F}$ with respect to the weak topology of $L^2(I;\rsym)$ is the functional 
\[\F:L^2(I,\rsym)\to[0, +\infty],\;\;\F(M)\coloneqq
\int_0^\ell \Bigl(Q(M(x))+\alpha^+(\det M(x))^++\alpha^-(\det M(x))^-\Bigl)\dx,\]
where the constants $\alpha^+,\alpha^-$ are defined in \eqref{alpha_p} and \eqref{alpha_m}.
In Theorems \ref{teo:perturbation} and \ref{teo:approx1} we give a refinement of this result. Moreover, given a piecewise constant function $M\in L^2(I;\rsym)$ we explicitly construct a sequence $M^n\subset L^2(I;\rsym)$ weakly converging to $M$ in $L^2$ and such that $\hat{\F}(M^n)\to\F(M)$. This construction requires a completely new strategy with respect to the isotropic case in \cite{mora1}.
For more details we refer to the discussion after Remark \ref{rem:uniform_convergence}.

Here we provide an outline of the construction of the recovery sequence for Theorem \ref{teo:gamma_convergence}.
Suppose for simplicity that $\Pi_\e^{nat} = 0$ for every $\e$, and that the reference configuration of the \mob{} strip is the rectangle $(0, \ell)\times(-\e/2, \e/2)$ (see Subsection \ref{subsec:reference_configuration} for the general case). Let $(y, R)\in\A_0$.
Roughly speaking, the strategy consists in first constructing a sequence of framed curves $(y_\e, R_\e)$ converging to $(y, R)$, and then extending these curves to isometries $u_\e$ with the prescribed boundary conditions. Each of these isometries is defined as a ruled surface with centerline $y_\e$. A necessary condition for $u_\e$ to be an isometry is that its second fundamental form has zero determinant (according to Gauss' Theorema Egregium). The approximating sequence $(y_\e, R_\e)$ is chosen in such a way that this constraint is satisfied on the centerline.

Setting $\mu(t)\coloneqq d'_1(t)\cdot d_3(t)$ and $\tau(t) \coloneqq d_2'(t)\cdot d_3(t)$, the limiting energy rewrites as
\(J(y, R) = \F(M),\) where $\displaystyle M = \begin{pmatrix}
    \mu&\tau\\\tau&\gamma
\end{pmatrix},$
for a suitable function $\gamma\in L^2(I).$

By the relaxation result \cite[Proposition 9]{mora2} there is a sequence $(M^\e)_\e$ weakly converging to $M$ in $L^2$ such that $\det(M^\e) = 0$ and $\F(M^\e)\to\F(M)$, as $\e\to0$. For every $\e>0$ we set $\mu_\e\coloneqq M^\e_{11}$ and $\tau_\e\coloneqq M^\e_{12}$. A frame $(y_\e, R_\e)$ satisfying $\mu_\e = (d^\e_1)'\cdot d_3^\e,\,\tau_\e = (d^\e_2)'\cdot d^\e_3$ and $(d^\e_1)'\cdot d^\e_2 = 0$ can now be obtained by solving a suitable Cauchy problem (see also \eqref{eq:cauchy_problem}).
Using that $\det(M^\e(x)) = 0$ a.e. in $(0, \ell)$ by construction, \cite[Proposition 13]{mora2} shows that $M^\e$ can be interpreted as the second fundamental form of an isometry $u_\e$ restricted to the midline of the strip. Since $\F(M^\e)\to\F(M)$, the convergence $E_\e(u_\e)\to \F(M) = J(y, R)$ easily follows.

However the isometries $u_\e$ do not satisfy in general the required boundary conditions. To solve this issue the sequence $(M^\e)_\e$ has to be chosen so that $(y_\e, R_\e)$ satisfies the same boundary conditions as the elements of $\A_0$.
In order to do so, we follow the approach of \cite{mora1}: using some results due to Hornung \cite{hornung}, we can find perturbations $\tilde{\tau}_\e, \tilde{\mu}_\e$ of $\tau_\e$ and $\mu_\e$ respectively, such that the framed curves $(\tilde{y}_\e, \tilde{R}_\e)$ associated with $\tilde{\mu}_\e$ and $\tilde{\tau}_\e$ weakly converge to $(y, R)$, and have the same boundary conditions. The energy associated with the deformed frame is $\F(\tilde{M}^\e)$, with
\[\tilde{M}^\e\coloneqq \tilde{\mu}_\e e_1\otimes e_1+\tilde{\tau}_\e(e_1\otimes e_2+e_2\otimes e_1)+\tilde{\gamma}_\e e_2\otimes e_2,
\]
where $\tilde{\gamma}_\e$
is uniquely determined through the zero determinant constraint $\tilde{\mu}_\e\cdot\tilde{\gamma}_\e = \tilde{\tau}_\e^2$ whenever $\tilde{\mu}_\e\neq 0$. We note that, if $\tilde{\mu}_\e$ is close to zero, we may lose control of $\tilde{\gamma}_\e$. Therefore, in order to achieve the convergence ${\F}(\tilde{M}^\e)\to\F(M),$ it is crucial to bound $\mu_\e$ (and $\tilde{\mu}_\e$) away from zero, so that also $\tilde{\gamma}_\e$ remains bounded. Addressing this issue requires a refinement of the relaxation result of \cite{mora2}. Addressing this issue is the original contribution of this work, and is the main topic of Section \ref{sec:refinement} (see also the discussion after Remark \ref{rem:uniform_convergence} for a detailed outline of Section \ref{sec:refinement}).

\section{Setting of the problem}

\subsection{Reference configuration}\label{subsec:reference_configuration}
Given $\ell>0$ we denote $I= (0, \ell)$ and $\Omega = I\times(-1/2, 1/2).$ For $\e>0$, we define $\rho_\e(x_1, x_2)\coloneqq (x_1, \e x_2)$ and the rectangular region $\Omega_\e = \rho_\e(\Omega) = I\times(-\e/2, \e/2)$. The reference configuration of the inextensible elastic strip is $S_\e\coloneqq \chi(\Omega_\e)\subseteq \R^2$, where $\chi:\R^2\to\R^2$ is an orientation preserving injective map of class $C^2$ such that 
\begin{equation}\label{eq:100}
    |\partial_1\chi(x_1, 0)| = 1\;\forall\,x_1\in\R.
\end{equation}
In the following we call $B(t)\coloneqq\chi(t, 0)$ the midline of the reference configuration, which is parametrised by arc length because of (\ref{eq:100}).
By composing $\chi$ with an orientation preserving isometry, we may further assume that
\(\chi(0, 0) = (0, 0)\) and \(\partial_1\chi(0, 0) = e_1.\)

\subsection{The energy density}
We describe the spatial configuration of the thin inextensible elastic ribbon with an isometry $u\in W^{2, 2}(S_\e; \R^3)$. The inextensibility constraint reads
\[(\nabla u)^T(\nabla u) = \eye,\]
where $\eye$ is the $2\times 2$ identity matrix.
We call $\nu_u\coloneqq\partial_1u\wedge\partial_2u$ the unit normal to the deformed configuration, and we denote by $\Pi_u:S_\e\to\rsym$ the second fundamental form of the deformation, defined by $(\Pi_u)_{\alpha\beta}\coloneqq\nu_u\cdot\partial_\alpha\partial_\beta u$. Since $u$ is an isometry, by Gauss' Theorema Egregium the Gaussian curvature of the strip is zero, that is, $\det(\Pi_u) = 0$ in $S_\e$.

The elastic energy of a spatial configuration $u$ of the ribbon is given by
\begin{equation}\label{eq:en_density}
    E_\e(u)\coloneqq \frac{1}{\e}\int_{S_\e} Q\bigl(\Pi_u(x)-\Pi_\e^{nat}(x)\bigl)\dx= \frac{1}{\e}\int_{S_\e} \K\bigl(\Pi_u(x)-\Pi_\e^{nat}(x)\bigl)\cdot \bigl(\Pi_u(x)-\Pi_\e^{nat}(x)\bigl)\dx.
\end{equation}
The energy density, $Q:\rsym\to\R$ is a positive definite quadratic form which accounts for the possible material's anisotropy, and $\K$ is the associated linear map from $\rsym$ into itself which, in particular, verifies
\[\K A\cdot B = \K B\cdot A, \text{ and }Q(A) = \K A\cdot A\ge c|A|^2 \text{ for every }A, B\in\rsym,\,\text{ for some }c>0.\]
The quantity $\Pi_\e^{nat}\in L^2(S_\e;\rsym)$ in (\ref{eq:en_density}) is a symmetric tensor field, representing the second fundamental form of a natural configuration, or a target curvature tensor field not necessarily corresponding to a configuration.

\subsection{Boundary conditions}\label{subsec:boundary_conditions}
In the following, we refer to the left and right short sides of $S_\e$ as the images of the curves $t\mapsto\chi(0, t)$ and $t\mapsto \chi(\ell, t)$, respectively, which are defined for $t\in(-\e/2, \e/2)$. Similarly, the images of the left and right short sides under a deformation $u$ will be called the left and right short sides of the strip $u(S_\e)$.

Let $\bar{y}\in\R^3$ and $\bar{R}^T = (\bar{d}_1|\bar{d}_2|\bar{d}_3)\in SO(3)$. We impose the following boundary conditions on the short sides of the strip $u(S_\e)$:
\begin{equation}\label{eq:bc}
\begin{cases}
    &u(0, 0) = (0, 0, 0) \text{ and } u(\chi(\ell, 0)) = \bar{y},\\
    &\nabla u(\chi(0, x_2)) = (e_1|e_2) \text{ for every } x_2\in (-\e/2, \e/2),\\
    &\nabla u(\chi(\ell, x_2)) = (\bar{d}_1|\bar{d}_2) \text{ for every } x_2\in (-\e/2, \e/2),
\end{cases}
\end{equation}
where the equalities are meant in the sense of the boundary traces.
These conditions amount to requiring that the left short side of the strip is clamped, and the right short side is isometrically deformed.

From now on $\A_\e$ will denote the space of admissible deformations, that is,
\begin{equation}
    \A_\e\coloneqq\{u\in W^{2, 2}(S_\e;\R^3):\:(\nabla u)^T(\nabla u) = \eye \text{ a.e. in }S_\e \text{ and }u\text{ satisfies }(\ref{eq:bc})\}.
\end{equation}
We point out that the set $\A_\e$ may be empty for some specific values of $\bar{y}$ and $\bar{R}$. For instance, from (\ref{eq:100}) it follows that necessarily \\$|\bar{y}| = |u(\chi(\ell, 0))-u(\chi(0, 0))| \le \ell,$ see also \cite{mora1} for some characterizations of $\A_\e$ in the case $\chi(x_1, x_2) = (x_1, x_2)$. We will discuss again this problem in Subsection \ref{subsec:compactness_and_limit_space}; see also Remark \ref{rem:non_empty_space}.

\subsection{The rescaled energy density}
Let $\chi_\e:\Omega\to S_\e$ be defined by
\(\chi_\e\coloneqq\chi\circ\rho_\e\). We can associate to a given deformation $u:S_\e\to\R^3$ the rescaled deformation $y:\Omega\to\R^3$ defined as
\[y\coloneqq u\circ\chi_\e.\]
We define $D^\e\coloneqq(\nabla\chi)\circ\rho_\e$, and for $\alpha=1, 2$ we set $D^\e_\alpha\coloneqq ((\nabla\chi)\circ\rho_\e) e_\alpha$ the directors defining the geometry of $S_\e$. We point out for future reference that
\begin{equation}\label{eq:4}
    D^\e\to\nabla\chi(\cdot, 0)\eqqcolon D
\end{equation}
uniformly and that there exists a constant $c>0$ such that
\begin{equation}\label{eq:5}
    c\le\det D^\e(x)\le1/c,\text{ and }c\le|D^\e(x)|\le 1/c\text{ for every }x\in\Omega.
\end{equation}
It is immediate to see that
\begin{equation}
\label{rescaled_boundary_cond_1}
    \begin{cases}
    \begin{aligned}
        &\partial_1y\cdot\partial_1y = D^\e_1\cdot D^\e_1,\\
        &\partial_1y\cdot\frac{\partial_2y}{\e} = D^\e_1\cdot D^\e_2,\\
        &\frac{\partial_2y}{\e}\cdot\frac{\partial_2y}{\e} = D^\e_2\cdot D^\e_2,
\end{aligned}
    \end{cases}
    \end{equation}
and
\begin{equation}
\label{rescaled_boundary_cond_2}
    \begin{cases}
    \begin{aligned}
        &y(0, 0) = (0, 0, 0) \text{ and }y(\ell, 0) = \bar{y},\\
        &\nabla y(0, x_2) = (e_1|e_2)\nabla\chi_\e(0, x_2)\text{ for }x_2\in (-\e/2, \e/2),\\
        &\nabla y(\ell, x_2) = (\bar{d}_1|\bar{d}_2)\cdot \nabla\chi_\e(\ell, x_2) \text{ for }x_2\in (-\e/2, \e/2).
\end{aligned}
    \end{cases}
\end{equation}
In particular, $u\in \A_\e$ if and only if the rescaled deformation $y$ belongs to the space 
\begin{equation}
    \A_\e^\Omega\coloneqq\{y\in W^{2, 2}(\Omega:\R^3):y\text{ satisfies }(\ref{rescaled_boundary_cond_1})\text{ and }(\ref{rescaled_boundary_cond_2})\}.
\end{equation}
The normal vector to the rescaled configuration is 
\(\nu_y \coloneqq \nu_u\circ\chi_\e,\)
and following the same computations performed in \cite[Section 2.1]{mora2}, we find that
\(\displaystyle\Pi_u\circ\chi_\e = (D^\e)^{-T}\Pi_{y, \e}(D^\e)^{-1},\)
where $\Pi_{y, \e}$ is
the rescaled second fundamental form of $y$ defined as
\begin{equation}\label{eq:scaled_second_form}
\Pi_{y, \e} = (\nu_y\cdot\partial_1\partial_1y)e_1\otimes e_1+\nu_y\cdot\frac{\partial_1\partial_2y}{\e}(e_1\otimes e_2+e_2\otimes e_1)+\nu_y\cdot\frac{\partial_2\partial_2y}{\e^2}e_2\otimes e_2.
\end{equation}
Finally, the energy of the strip can be written in terms of the rescaled deformation as $J_\e:\A_\e^\Omega\to[0, +\infty)$, defined by
\begin{equation}\label{eq:rescaled_energy}
    J_\e(y) = \int_\Omega\K\bigl((D^\e)^{-T}(\Pi_{y, \e}-\Pi^0_\e)(D^\e)^{-1}\bigl)\cdot \bigl((D^\e)^{-T}(\Pi_{y, \e}-\Pi^0_\e)(D^\e)^{-1}\bigl)\det D^\e\dx,
\end{equation}
where we have set
\[\Pi^0_\e\coloneqq ((D^\e)^T)\Pi^{nat}_\e\circ\chi_\e D^\e.\]
In this way, we have $J_\e(y) = E_\e(u).$

\subsection{Compactness and limit space}\label{subsec:compactness_and_limit_space}
In the following we will always assume that
\[\Pi^0_\e\to \Pi^0\text{ strongly in }L^2(\Omega;\R^{2\times 2}),\]
where $\Pi^0 \in L^2(I;\rsym).$ 
Moreover we set $D_\alpha\coloneqq De_\alpha$ for $\alpha = 1, 2$.
\paragraph{Limit space}
As $\e$ tends to zero, the convergence of the admissible deformations naturally leads, as shown in Lemma \ref{lemma:compactness} below, to the class of limiting admissible configurations
\begin{equation}
\label{eq:limit_space}
\begin{aligned}
\A_0 &= \Bigl\{(y, R)\in W^{2, 2}(I;\R^3)\times W^{1, 2}(I;\R^{3\times3}):R^T = (d_1|d_2|d_3),\\
&d_1 = y',\;\;d_3 = \frac{d_1\wedge d_2}{|d_1\wedge d_2|},\;\;d_\alpha\cdot d_\beta = D_\alpha\cdot D_\beta,\;\;d_1'\cdot(d_3\wedge d_1) = D_1'\cdot(e_3\wedge D_1),\\
& y(0) = 0,\;\;y(\ell) =\bar{y},\;\;d_i(0) = D_i(0)\text{ for }i=1, 2,\;\;\Bigl(d_1(\ell)|d_2(\ell)\Bigl) = (\bar{d}_1|\bar{d}_2)\cdot\nabla\chi(\ell, 0)\Bigl\},
\end{aligned}    
\end{equation}
where in the equalities $d_i(0) = D_i(0)$ we identify $D_i(0)$ with a vector in $\R^2\times\{0\}.$

\paragraph{Compactness}

We define the scaled gradient $\nabla_\e$ by $\nabla_\e\cdot = (\partial_1\cdot|\e^{-1}\partial_2\cdot).$
\begin{lemma}\label{lemma:compactness}
    Let $(y_\e)_\e$ be a sequence of scaled isometries such that $y_\e\in\A^\Omega_\e$ for every $\e>0$ and 
\[\sup_\e J_\e(y_\e)<\infty.\]
Then, up to a subsequence, there exists $(y, R)\in\A_0$ such that, setting $R = (d_1|d_2|d_3)^T,$
\begin{equation}\label{eq:lemma_compact_1}
    y_\e\rightharpoonup y\text{ in }W^{2, 2}(\Omega;\R^3),\;\nabla_\e y_\e\rightharpoonup(d_1|d_2)\text{ in }W^{1, 2}(\Omega;\R^{3\times 2}),
\end{equation}
and 
\begin{equation}\label{eq:lemma_compact_2}
    A_{y_\e, \e}\rightharpoonup\begin{pmatrix}
    d_1'\cdot d_3&d_2'\cdot d_3\\
    d_2'\cdot d_3&\gamma
\end{pmatrix}\text{ in }L^2(\Omega;\rsym)
\end{equation}
for some $\gamma\in L^2(\Omega).$
\end{lemma}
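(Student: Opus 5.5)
The plan is to reduce to the compactness result without boundary conditions in \cite{mora2}, and then check that the boundary conditions pass to the limit. Here $A_{y_\e,\e}$ is understood as the rescaled second fundamental form $\Pi_{y_\e,\e}$ of \eqref{eq:scaled_second_form}.

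\textbf{Step 1: a priori bounds.} Since $Q$ is positive definite and \eqref{eq:5} gives $c\le\det D^\e\le 1/c$ and $|D^\e|,|(D^\e)^{-1}|$ uniformly bounded, the bound $\sup_\e J_\e(y_\e)<\infty$ yields
\[
\sup_\e\|\Pi_{y_\e,\e}-\Pi^0_\e\|_{L^2(\Omega)}<\infty .
\]
Because $\Pi^0_\e\to\Pi^0$ strongly in $L^2$, it follows that $\Pi_{y_\e,\e}$ is bounded in $L^2(\Omega;\rsym)$.

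The constraints \eqref{rescaled_boundary_cond_1} show that $\nabla_\e y_\e$ is bounded in $L^\infty$. Moreover, $\partial_\alpha(\nabla_\e y_\e)$ is expressed through $\Pi_{y_\e,\e}\,\nu_{y_\e}$ plus tangential terms. These tangential terms involve only derivatives of $D^\e$ (Christoffel-type terms), which are bounded since $\chi\in C^2$. Also $y_\e(0,0)=0$.

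These are exactly the hypotheses of the compactness lemma in \cite{mora2}. Invoking it, up to a subsequence we obtain the convergences \eqref{eq:lemma_compact_1} and \eqref{eq:lemma_compact_2} for some $\gamma\in L^2$, and a limit $(y,R)$ that satisfies all the pointwise constraints in \eqref{eq:limit_space}:
\[
d_1=y',\quad d_3=\frac{d_1\wedge d_2}{|d_1\wedge d_2|},\quad d_\alpha\cdot d_\beta=D_\alpha\cdot D_\beta,\quad d_1'\cdot(d_3\wedge d_1)=D_1'\cdot(e_3\wedge D_1).
\]
We also get $y,d_\alpha$ independent of $x_2$ and $R\in W^{1,2}$. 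In particular, the bound $\e^{-1}\|\partial_2\nabla_\e y_\e\|_{L^2}\lesssim 1$ forces the $x_2$-independence.

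\textbf{Step 2: boundary conditions.} Only the boundary conditions in \eqref{eq:limit_space} remain to be checked.

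The weak convergence $y_\e\rightharpoonup y$ in $W^{2,2}(\Omega)$ gives uniform convergence by compact embedding in two dimensions. Hence $y(0)=\lim y_\e(0,0)=0$ and $y(\ell)=\bar y$.

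For the gradients, $\nabla_\e y_\e\rightharpoonup(d_1|d_2)$ in $W^{1,2}(\Omega)$, so the traces on $\{0\}\times(-1/2,1/2)$ and $\{\ell\}\times(-1/2,1/2)$ converge weakly in $L^2$ by continuity of the trace operator. By \eqref{rescaled_boundary_cond_2}, after rescaling $x_2$, the trace of $\nabla_\e y_\e$ at $x_1=0$ is $(e_1|e_2)D^\e(0,x_2)$. Using \eqref{eq:4}, this converges uniformly to $(e_1|e_2)\nabla\chi(0,0)$, which equals $(D_1(0)|D_2(0))$ after the identification of $\R^2$ with $\R^2\times\{0\}$. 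Similarly, at $x_1=\ell$ the trace tends to $(\bar d_1|\bar d_2)\nabla\chi(\ell,0)$. Since the limits $d_i$ are continuous, these identities hold pointwise, and therefore $(y,R)\in\A_0$.

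\textbf{Main obstacle.} The only delicate point is to make sure the hypotheses of \cite{mora2} are met in the rescaled, curved setting. Concretely, we must check that the relation between $\nabla_\e^2 y_\e$ and $\Pi_{y_\e,\e}$ involves only bounded geometric terms from $D^\e$. We also need the scaling conventions of the traces (in $x_2\in(-\e/2,\e/2)$ versus rescaled coordinates) to be handled consistently.
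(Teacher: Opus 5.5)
Your proposal is correct and follows essentially the same route as the paper. Both arguments invoke the compactness argument of \cite{mora2} (Lemma 2 there) to get the convergences and the pointwise constraints of $\A_0$. They then recover the boundary conditions through the compact embedding $W^{2,2}(\Omega)\subset C(\bar\Omega)$ and the continuity of the trace, combined with the uniform convergence of $(e_1|e_2)D^\varepsilon(0,\cdot)$ and $(\bar d_1|\bar d_2)D^\varepsilon(\ell,\cdot)$.
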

\begin{proof}
    Following the same proof as in \cite[Lemma 2]{mora2}, we deduce that exists $(y, R)\in W^{2, 2}(I;\R^3)\times W^{1, 2}(I;\R^{3\times 3})$ and a subsequence $(y_{\e_n})_n$ verifying (\ref{eq:lemma_compact_1}), (\ref{eq:lemma_compact_2}), and all the properties in the second line of (\ref{eq:limit_space}). To conclude that $(y, R)\in\A_0$ we only need to verify that the boundary conditions at $0$ and $\ell$ are satisfied.

    Since $W^{2, 2}(\Omega;\R^3)$ is compactly embedded in $C(\bar{\Omega}),$ the conditions $y(0) = 0$ and $y(\ell) = \bar{y}$ follow passing to the limit in the relations $y_\e(0, 0) = 0$ and $y_\e(\ell, 0) = \bar{y}$.

    Moreover, we have $\partial_1y_\e(0, x_2) = (e_1|e_2)\cdot\partial_1\chi(0, \e x_2)$. Since the right hand side converges uniformly to $(e_1|e_2)\cdot\partial\chi(0, 0)$ and $\partial_1y_\e\rightharpoonup d_1$ in $W^{1, 2}(\Omega;\R^3),$ by the continuity of the trace we deduce $d_1(0) = D_1(0).$ Similarly, from $\partial_2y_\e(0, x_2)/\e = (e_1|e_2)\cdot\partial_2\chi(0, \e x_2)$ we deduce that $d_2(0) = D_2(0)$. In a similar way we obtain the boundary conditions for $d_1$ and $d_2$ at $\ell$.
\end{proof}

\paragraph{Constraints on the boundary data.} For $\chi(x_1, x_2) = (x_1, x_2)$ and $|\bar{y}|<\ell$, one can show that $\A_0\neq \emptyset,$ see \cite{mora1}. For a general $\chi$, a complete characterization of the boundary data $\bar{y}, \bar{R}$ for which $\A_0\neq\emptyset,$ is not available at the moment. In the following, we will assume a priori the class $\A_0$ is not empty. It is easy to construct boundary data $\bar{y}, \bar{R}$ for which this condition is satisfied. Indeed, if $R = (d_1|d_2|d_3)^T$ is as in (\ref{eq:limit_space}), then $\hat{R}\coloneqq(d_1|d_3\wedge d_1|d_3)^T$ solves
\begin{equation}\label{eq:6}
    \hat{R}(0) = \eye,\;\;\;\hat{R}' = \begin{pmatrix}
    0&k&\mu\\
    -k&0&\tau\\
    -\mu&-\tau&0
\end{pmatrix}\hat{R},
\end{equation}
where $k(t)\coloneqq D_1'(t)\cdot(e_3\wedge D_1)$ and $\mu, \tau$ are some functions in $L^2(I)$. Therefore, if we choose $\bar{\mu}, \bar{\tau}\in L^2(I)$, then we can define $\hat{R}$ to be the solution of (\ref{eq:6}) with data $\bar{\mu}, \bar{\tau}$, and then define $R = (d_1|d_2|d_3)^T$ as
follows\[d_1 = \hat{d}_1,\;d_3 = \hat{d}_3,\;d_2 = (D_1\cdot D_2)d_1+\alpha d_3\wedge d_1,\]
where $\alpha = \sqrt{|D_2|^2-(D_1\cdot D_2)^2}$. Defining $y(t) = \int_0^td_1'(s)\ds$ we conclude that the set $\A_0$, with boundary data $\bar{y} = y(\ell)$ and $\bar{R} = R(\ell)$, is nonempty.

\paragraph{Gamma-convergence Theorem.}
We define the constants
\begin{equation}\label{alpha_p}
    \alpha^+\coloneqq\sup\{\alpha>0:\:Q(M)+\alpha\det(M)\ge 0\text{ for every }M\in\rsym\},
\end{equation}
and 
\begin{equation}\label{alpha_m}
    \alpha^-\coloneqq\sup\{\alpha>0:\:Q(M)-\alpha\det(M)\ge 0\text{ for every }M\in\rsym\}.
\end{equation}
The limit energy density is the function $\bar{Q}:I\times\R\times\R\to[0, +\infty)$ given by
\begin{equation}\label{eq:limit_energy_density}
\begin{aligned}
    \bar{Q}(x_1, \mu, \tau)\coloneqq&\min\Bigl\{Q\bigl(D(x_1)^{-T}(C-\Pi^o(x_1))D(x_1)^{-1}\bigl)\det D(x_1)+\alpha^+\frac{(\det C)^+}{\det D(x_1)}+\alpha^-\frac{(\det C)^-}{\det D(x_1)}:\\
    &C = \mu e_1\otimes e_1+\tau(e_1\otimes e_2+e_2\otimes e_1)+\gamma e_2\otimes e_2,\,\gamma\in\R\Bigl\}
\end{aligned}
\end{equation}
for every $x_1\in I$ and $\mu, \tau\in\R,$ where $(\det C)^- = \max(-\det C, 0)$ and $(\det C)^+ = \max(\det C, 0)$.
The limit functional is $J:\A_0\to\R$ given by
\begin{equation}\label{def:limit_functional}
    J(y, d_1, d_2, d_3) \coloneqq \int_I\bar{Q}(x_1, d_1'\cdot d_3, d_2'\cdot d_3)\dx_1
\end{equation}
for every $(y, R) = \bigl(y, (d_1|d_2|d_3)^T\bigl)\in\A_0.$

\begin{remark}\label{rem:en_density}
    Setting $D^\alpha\coloneqq D^{-T}e_\alpha$ we obtain $D^\alpha\cdot D_\beta = \delta_{\alpha\beta}.$ Therefore, $\bar{Q}$ can also be represented as
    \begin{equation}\label{eq:new_limit_energy_density}
        \begin{aligned}
    \bar{Q}(x_1, \mu, \tau)\coloneqq&\min\Bigl\{\bigl(Q(M-D^{-T}\Pi^oD^{-1})+\alpha^+(\det M)^++\alpha^-(\det M)^-\bigl)\det D:\\
    &M = \mu D^1\otimes D^1+\tau(D^1\otimes D^2+D^2\otimes D^1)+\gamma D^2\otimes D^2,\,\gamma\in\R\Bigl\}.
\end{aligned}
    \end{equation}
    
\end{remark}

We are now ready to state the $\Gamma-$convergence result.
\begin{theorem}\label{teo:gamma_convergence}
Assume that the space $\A_0$ is not empty and that either $\bar{y}\neq\chi(\ell, 0)$ or $\bar{R}^T\neq(D_1(\ell)|D_2(\ell)|e_3)$. Then, as $\e\to 0$, the functionals $J_\e$ $\Gamma-$converge to the limit functional $J$ in the following sense:
\begin{itemize}
    \item[(i)]\emph{(liminf inequality)} for every sequence $(y_\e)_\e$ such that $y_\e\in\A_\e$ for all $\e$, and for every $\bigl(y, (d_1|d_2|d_3)^T\bigl)\in\A_0$ such that $y_\e\rightharpoonup y$ weakly in $W^{2, 2}(\Omega;\R^3)$ and $\nabla_\e y_\e\rightharpoonup (d_1|d_2)$ weakly in $W^{1, 2}(\Omega;\R^3)$, we have that
    \[\liminf_{\e\to 0}J_\e(y_\e)\ge J(y, d_1, d_2, d_3);\]
    \item[(ii)]\emph{(recovery sequence)} for every $\bigl(y, (d_1|d_2|d_3)^T\bigl)\in\A_0$ there exists a sequence $(y_\e)_\e$ with $y_\e\in\A_\e$ for every $\e$, such that 
    \[y_\e\rightharpoonup y \text{ weakly in }W^{2, 2}(\Omega;\R^3) \text{ and }\nabla_\e y_\e\rightharpoonup (d_1|d_2) \text{ weakly in }W^{1, 2}(\Omega;\R^3),\]
    and 
    \[\limsup_{\e\to 0}J_\e(y_\e)\le J(y, d_1, d_2, d_3).\]
\end{itemize}
\end{theorem}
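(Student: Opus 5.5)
The liminf inequality (i) is obtained directly from the unconstrained theory. If $y_\e\in\A^\Omega_\e$ with $\sup_\e J_\e(y_\e)<\infty$ (otherwise there is nothing to prove), the boundary conditions play no role in the lower bound. Lemma \ref{lemma:compactness} identifies the weak limit of the rescaled second fundamental forms $A_{y_\e,\e}$ as a matrix with entries $d_1'\cdot d_3$, $d_2'\cdot d_3$ and some $\gamma$. We then apply the liminf argument of \cite{mora2}: lower semicontinuity of the relaxed functional $\F$ of \cite[Proposition 9]{mora2} with respect to weak $L^2$ convergence, together with the uniform convergence $D^\e\to D$ in \eqref{eq:4}, the bounds \eqref{eq:5}, and the strong convergence $\Pi^0_\e\to\Pi^0$. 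This gives
\[
\liminf_{\e\to0} J_\e(y_\e)\ge\int_I \Bigl(Q\bigl(D^{-T}(C-\Pi^0)D^{-1}\bigr)\det D+\alpha^\pm\frac{(\det C)^\pm}{\det D}\Bigr)\dx_1 ,
\]
and minimizing in $\gamma$ yields $J$ via \eqref{eq:limit_energy_density}.

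For the recovery sequence (ii), we fix $(y,R)\in\A_0$ with $\mu=d_1'\cdot d_3$ and $\tau=d_2'\cdot d_3$, and choose $\gamma$ pointwise optimal in \eqref{eq:new_limit_energy_density}. The energy then reads $\int_I \F$-type integrand of $M=\mu D^1\otimes D^1+\tau(D^1\otimes D^2+D^2\otimes D^1)+\gamma D^2\otimes D^2$, so $J(y,R)$ equals the relaxed energy of $M$.

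\textbf{Step 1 (density).} By a density argument and a diagonal procedure, we reduce to $\mu,\tau,\gamma$ piecewise constant. We correct the boundary values of the associated frames by the perturbation technique below, so that it suffices to treat such data.

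\textbf{Step 2 (refined relaxation).} Using the refined relaxation results, Theorems \ref{teo:perturbation} and \ref{teo:approx1}, we construct $M^n\rightharpoonup M$ in $L^2$ with $\det M^n=0$, $\hat\F(M^n)\to\F(M)$, and with the $(1,1)$-entry $\mu_n$ bounded away from zero uniformly in $n$ (possibly after a small perturbation of $M$ costing arbitrarily little energy).

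\textbf{Step 3 (frames with correct boundary data).} Solving the Cauchy problem \eqref{eq:6} with data $\mu_n,\tau_n$ gives framed curves $(y_n,R_n)$. These converge weakly in $W^{2,2}\times W^{1,2}$ to $(y,R)$ but miss the conditions at $\ell$. Following \cite{mora1} and Hornung \cite{hornung}, we consider the endpoint map $(\mu,\tau)\mapsto(y(\ell),R(\ell))$. Under the hypothesis that the data are not the trivial ones, i.e. $\bar y\ne\chi(\ell,0)$ or $\bar R^T\ne(D_1(\ell)|D_2(\ell)|e_3)$, this map is a submersion at the limit point. This hypothesis excludes the degenerate configuration in which the linearization may fail to be surjective. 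An implicit function argument then yields perturbations $\tilde\mu_n=\mu_n+\sum_j a^n_j\phi_j$ and $\tilde\tau_n=\tau_n+\sum_j b^n_j\psi_j$, with fixed smooth $\phi_j,\psi_j$ and coefficients $a^n,b^n\to0$, whose frames satisfy exactly the boundary conditions in \eqref{eq:limit_space}. Since $\mu_n$ is bounded away from zero and the perturbations are small in $L^\infty$, $\tilde\gamma_n=\tilde\tau_n^2/\tilde\mu_n$ differs from $\gamma_n$ by $o(1)$ in $L^2$, and the energy still converges to $J(y,R)$.

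\textbf{Step 4 (isometries).} By \cite[Proposition 13]{mora2}, for each $n$ and small $\e$ we extend $(\tilde y_n,\tilde R_n)$ to an isometry as a ruled surface over the midline, with rulings determined by $\tilde M^n$. The boundedness of $\tilde\mu_n$ from below keeps the rulings transversal, so the surface is defined on the whole strip for small $\e$. The clamped ends are compatible because near $0$ and $\ell$ we can make the rulings coincide with $D_2$ (e.g.\ by choosing $M^n$ with $\tau=0$ near the ends), so that \eqref{rescaled_boundary_cond_2} holds. The energy of this isometry converges to $\hat\F(\tilde M^n)$ as $\e\to0$, and a diagonal choice $n=n(\e)$ concludes.

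The main obstacle is Step 2 combined with Step 3: producing zero-determinant approximations with $\mu_n$ uniformly away from zero, in the anisotropic setting, while preserving energy convergence. This is exactly where I would rely on Theorems \ref{teo:perturbation} and \ref{teo:approx1}.
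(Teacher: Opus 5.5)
Your liminf argument and the overall architecture of the recovery sequence match the paper: optimal $\gamma$, zero-determinant approximations with $M^{\B,n}_{11}$ bounded below, a Hornung-type finite-dimensional correction controlling $\tilde\tau_n^2/\tilde\mu_n$, and ruled isometries from \cite[Proposition 13]{mora2}. However, Step 4, which enforces the clamped conditions on the short sides, fails for a general (curved) reference configuration. The failure is tied to your claim that $\mu_n$ stays away from zero on all of $I$.

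On the ruling through $B(t)$ the isometry has constant gradient $(R^n(t))^Te_1\otimes B'(t)+(R^n(t))^Te_2\otimes N(t)$. This equals $(e_1|e_2)$ if and only if $(R^n(t))^T=(B'(t)|N(t)|e_3)$. Both $R^n$ and the reference frame $(B'|N|e_3)^T$ equal $\mathbb{I}$ at $t=0$ and solve \eqref{eq:cauchy_problem}, with $A_{M^n}$ and with $k(e_1\otimes e_2-e_2\otimes e_1)$ respectively. Hence they agree on an interval $(0,t_0)$ only if $\mu_n=\tau_n=0$ a.e.\ there. If $\mu_n$ is bounded away from zero near $0$, the clamp holds only on the single straight ruling through $B(0)$ (and on the extension $t<0$). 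But the short side $\chi(\{0\}\times(-\e/2,\e/2))$ is in general a curved arc; only its tangent $D_2(0)$ at $B(0)$ is known. It therefore meets rulings through $B(t)$ for a whole range of $t>0$, where $\nabla u^n\neq(e_1|e_2)$.

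Aligning the end rulings with $D_2$ does not cure this. It works only when the short sides are straight segments lying on the end rulings, e.g.\ for $\chi=\mathrm{id}$. Moreover, $\tau_n=0$, $\det M^n=0$ and $\mu_n\neq0$ give $M^n=\mu_nB'\otimes B'$, whose rulings are along $N$, not $D_2$, unless $D_1\cdot D_2=0$. The same problem occurs at $t=\ell$.

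The missing idea is property (i) of Theorem \ref{teo:perturbation}: $\lambda_n=0$, i.e.\ $M^n\equiv0$, on a neighbourhood of $\partial I$, with $p_n=B'$ there. Then $A_{M^n}=k(e_1\otimes e_2-e_2\otimes e_1)$ near the endpoints. By uniqueness for \eqref{eq:cauchy_problem}, $(R^n)^T=(B'|N|e_3)$ near $0$. Solving backwards from the admissible end value gives $(R^n)^T=\bar R^T(B'|N|e_3)$ near $\ell$. So $\nabla u^n$ equals the prescribed constant on the open set $\Phi^n((-\delta_n,\delta_n)\times(-\eta_n,\eta_n))$, where $\Phi^n(t,s)=B(t)+sp_n^\perp(t)$, and analogously around $B(\ell)$. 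This set contains the whole short side once $\e$ is small, and a diagonal choice $\e_n$ concludes.

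Consequently $\mu_n$ must vanish near the ends, which has two further consequences for your plan:
\begin{itemize}
\item Transversality of the rulings there must come from $p_n\cdot B'>0$ (with $p_n=B'$ where $\lambda_n=0$), not from a lower bound on $\mu_n$.
\item The Hornung correction of your Step 3 must be supported in an interior interval where $|\mu_n|$ is bounded below; the paper uses $J\subset(1/4,3/4)$. This preserves $M^n=0$ near $\partial I$, and your estimate on $\tilde\gamma_n$ is needed only there, since $\tilde\gamma_n=\gamma_n$ off the support.
\end{itemize}
Note also that Theorem \ref{teo:approx1} gives $|M^{\B,n}_{11}|\ge\tilde c$ only on $I_n$, with $\lambda_n=0$ off $I_n$. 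Theorem \ref{teo:perturbation} already includes admissibility, so once you invoke it your Step 3 is redundant.

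Finally, make the role of the hypothesis on $(\bar y,\bar R)$ explicit. By Remark \ref{rem:degenerate_curves}, $\mu=\tau=0$ a.e.\ on $I$ would force trivial end data. So $A_M$ is nondegenerate, which is exactly what Lemma \ref{lemma:perturbation_hornung} requires.
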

The requirement that either $\bar{y}\neq\chi(\ell, 0)$ or $\bar{R}^T\neq(D_1(\ell)|D_2(\ell)|e_3)$ can be slightly relaxed. We provide a discussion of this point in Remark \ref{rem:degenerate_case}, following the proof of Theorem \ref{teo:gamma_convergence} in Section \ref{sec:recovery}.

\begin{remark}\label{rem:non_empty_space}
    A consequence of Theorem \ref{teo:gamma_convergence}-(ii) is that, if $\A_0\neq\emptyset$, then $\A^\Omega_\e\neq\emptyset$ for every $\e$ sufficiently small.
\end{remark}

\section{Refinement of the relaxation result}\label{sec:refinement}
\subsection{Notation}\label{subsec:notation}
We set $\ell = 1$ and $I = (0, 1)$: this will not affect any result of this section. In the following we will identify vectors $(\alpha, \beta)\in\R^2$ with $(\alpha, \beta, 0)\in\R^3$ and vice versa. In this sense, recalling $B(\cdot)\coloneqq\chi(\cdot, 0)$, we define $N\coloneqq e_3\wedge B'$.

For maps $v\in L^2(I;\R^2)$ we denote by $v^\B_i$ the components of $v$ the basis $\B\coloneqq\{B', N\}$ of $\R^2$, that is, we write 
\[v(t) = v^\B_1(t)B'(t)+v^\B_2(t)N(t),\text{ and we set }v^\B\coloneqq (v^\B_1, v^\B_2)^T.\]
Similarly, for $M\in L^2(I;\rsym)$ we call $M^\B_{ij}$ the entries of $M$ with respect to the basis $\B$, that is,
\begin{equation}\label{eq:notation_M}
        M = M^\B_{11}B'\otimes B' +M_{12}^\B\bigl(B'\otimes N+N\otimes B'\bigl)+M_{22}^\B N\otimes N, \text{ and we set }M^\mathcal{B}\coloneqq\begin{pmatrix}
    M_{11}^\B&M^\B_{12}\\M_{12}^\B&M^\B_{22}
\end{pmatrix}.
    \end{equation}
Sometimes for $M\in\rsym$ we will also write $M^{\B(t)}$ to denote the entries of $M$ with respect to the basis $\B$ at time $t$. 
We set 
\[\U\coloneqq\spn\{e_1\otimes e_3-e_3\otimes e_1,e_2\otimes e_3-e_3\otimes e_2\},\;\;\tilde{\U}:=\mathbb{M}_{skew}^{3\times 3}.\]
For a matrix $A\in L^2(I;\tilde{\U})$ we define $R_A:I\to SO(3)$ to be the solution of the Cauchy problem
\begin{equation}\label{eq:cauchy_problem}
    \begin{cases}
    R_A'(t) = A(t)R_A(t)\text{ for }t\in I,\\
    R_A(0) = \eye,
\end{cases}
\end{equation}
so that $R_A\in W^{1, 2}(I;SO(3)).$
For a given matrix field $M\in L^2(I,\rsym)$, we denote $A_M\in L^2(I, \tilde{\U})$ the matrix
\[A_M = \begin{pmatrix}
    0&k&M^\B_{11}\\
    -k&0&M^\B_{12}\\
    -M^\B_{11}&-M^\B_{12}&0
\end{pmatrix},\]
where $k\in L^2(I;\R)$ is defined as $k(t)\coloneqq B''(t)\cdot N = D_1'\cdot(e_3\wedge D_1)$.

\begin{definition}
    \label{def:non_deg}
    We say that a map $A\in L^2(I;\tilde{\U})$ is \emph{nondegenerate} on a set $J\subseteq I$ if either $J\cap \{A_{13}\neq 0\}$ or $J\cap \{A_{23}\neq 0\}$ have positive measure. Whenever we refer to a map as \emph{nondegenerate} without specifying the interval 
    $J$, we implicitly assume that it is nondegenerate on $I$.
\end{definition}

\begin{definition}
    \label{def:admissible}
    Let $\bar{y}\in\R^3$ and $\bar{R}\in SO(3)$. For $A\in L^2(I;\tilde{\U})$ we define 
    \[\Gamma_A\coloneqq\int_0^1 R_A^T(t)e_1\dt.\]
    We say that $A$ is \emph{admissible} for the data $\bar{y}$ and $\bar{R}$ if it satisfies \[A_{12}\equiv k \;\text{ a.e. on I,}\;\;R_A^T(1) =\bar{R}^T\cdot(B'(1)|N(1)|e_3\bigl)\text{ and }\Gamma_A = \bar{y}.\]
\end{definition}

\begin{remark}
    Let $(y, (d_1|d_2|d_3)^T)\in\A_0$, set
    \(\mu = d_1'\cdot d_3,\;\tau = d_2'\cdot d_3,\) and define
    \[M\coloneqq\mu D^1\otimes D^1+\tau(D^1\otimes D^2+D^2\otimes D^1)+\gamma D^2\otimes D^2\]
    where $\gamma\in L^2(I)$.
    In the proof of \cite[Theorem 5-(ii)]{mora2} the authors showed that the directors 
    \[\tilde{R}^T = (\tilde{d}_1|\tilde{d}_2|\tilde{d}_3)\coloneqq (d_1|d_3\wedge d_1|d_3)\]
    solve the ODE system
    \(\displaystyle\tilde{R}' = A_M\tilde{R}\)
    with initial condition $\tilde{R} = \eye$ and that $A_M$ is admissible for the data $\bar{y}$ and $\bar{R}$ according to Definition~\ref{def:admissible}.
\end{remark}

Finally, we introduce the functional 
\begin{equation}\label{eq:functional_F}
    \F:L^2(I;\rsym)\to[0, +\infty),\;\;\F(M) = \int_I\bigl(Q(M)+\alpha^+(\det M)^++\alpha^-(\det M)^-\bigl)\det(D(x))\dx.
\end{equation}

Let $Q^0:\rsym\to[0, +\infty)$ be the function 
\begin{equation*}
    Q^0(M) = \begin{cases}
        Q(M)&\text{ if }\det(M) = 0,\\
        +\infty & \text{otherwise},
    \end{cases}
\end{equation*}
and let $\hat{\F}$ be defined as
\[\hat{\F}:L^2(I;\rsym)\to[0, +\infty],\;\;\hat{\F}(M) = \int_IQ^0(M(x))\dx.\]
By \cite[Proposition 9]{mora2} the bipolar function of $Q^0$ is given by
\begin{equation}\label{eq:qstar}
    Q^{**}(M)\coloneqq Q(M)+\alpha^+(\det M)^++\alpha^-(\det M)^-,
\end{equation} 
where $\alpha^\pm$ are defined as in (\ref{alpha_p}) and (\ref{alpha_m}). In case $\det(D(x)) = 1$ for every $x\in I$ it holds that
\begin{equation}\label{rem:relaxation}
	\F \text{ is the lower semicontinuous envelope of } \hat{\F} \text{ with respect to the weak topology of }L^2(I;\rsym).
\end{equation}

\subsection{The relaxation result}
The main Theorem of this section is the following.
\begin{theorem}\label{teo:perturbation}
    Let $M\in L^2(I;\rsym)$ be such that $A_M$ is nondegenerate and admissible for some data $\bar{y},\,\bar{R}$. Then, there exist $\lambda_n\in C^1(\bar{I})$ and $p_n\in C^1(\bar{I};\mathbb{S}^1)$ such that, setting $M^n \coloneqq \lambda_np_n\otimes p_n,$ we have
    \begin{itemize}
        \item[(i)] $p_n\cdot B'>0$ everywhere on $\bar{I},\;$ $p_n = B'$ near $\partial I$ and $\lambda_n = 0$ near $\partial I$ for every $n$;\label{point:1}
        \item[(ii)] $A_{M^n}$ is admissible for every $n$;\label{point:2}
        \item [(iii)]$M^n\rightharpoonup M$ weakly in $L^2(I;\rsym)$;\label{point:3}
        \item [(iv)]$\F(M^n)\to \F(M).$\label{point:4}
    \end{itemize}
\end{theorem}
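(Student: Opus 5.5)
The proof has three stages. First, replace $M$ by a piecewise constant approximation. Second, build explicit rank-one laminates with uniformly nondegenerate first entry. Third, restore admissibility by a finite-dimensional perturbation argument in the spirit of Hornung \cite{hornung}, as done in \cite{mora1}.

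\textbf{Step 1: reduction to piecewise constant data.} By density, choose $M_k$ with entries $(M_k)^\B_{ij}$ piecewise constant on a partition of $I$ (in the basis $\B$), with $M_k\to M$ strongly in $L^2$. Then $\F(M_k)\to\F(M)$, since $Q^{**}$ is continuous with quadratic growth. Admissibility of $A_{M_k}$ is lost, but it will be repaired in Step 3, where the perturbation is small in $L^2$. Because $D$ is continuous, I can also freeze $\det D$ and $\B$ on each small interval up to errors vanishing with the mesh. A diagonal argument then reduces (iii)–(iv) to the case where $M$ is constant on each subinterval $J_i$.

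\textbf{Step 2: explicit laminate on each subinterval.} On a subinterval where $M\equiv M_0$ is constant, I use the structure of $Q^{**}$ from \cite[Proposition 9]{mora2}. There are finitely many rank-one matrices $\lambda_j p_j\otimes p_j$ and weights $\theta_j$ with $\sum_j\theta_j\lambda_jp_j\otimes p_j=M_0$ and $\sum_j\theta_jQ(\lambda_jp_j\otimes p_j)=Q^{**}(M_0)$. If $\det M_0>0$ or $\det M_0<0$, two matrices suffice, obtained from the optimal direction in the definition of $\alpha^\pm$. The choice is not unique, and this freedom is used to ensure $p_j\cdot B'>0$. I will choose the $p_j$ (replacing $p_j$ by $-p_j$ is harmless) and, if necessary, a perturbed decomposition costing an arbitrarily small energy excess so that $p_j\cdot B'\ge\delta>0$; this is exactly the condition keeping $M^\B_{11}=\lambda(p\cdot B')^2$ away from zero when $\lambda\neq0$.

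I then oscillate on scale $1/n$ between these rank-one states, giving weak convergence to $M_0$ and energy convergence by Riemann–Lebesgue. To get $C^1$ regularity, I mollify the transitions. To get (i), I interpolate $p_n$ to $B'$ along $\mathbb{S}^1$ within the cone $\{p\cdot B'>0\}$ and let $\lambda_n$ go to $0$ in thin layers near $\partial I$ and at the jumps between subintervals. These layers have vanishing measure and bounded integrand, so they affect neither the weak limit nor the energy limit.

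\textbf{Step 3: restoring admissibility.} Admissibility means that $A_{12}=k$ (automatic for $A_{M^n}$), that $R_{A}(1)$ is prescribed, and that $\Gamma_A=\bar y$. These are $3+3=6$ scalar equations in the data $(M^\B_{11},M^\B_{12})=(\mu,\tau)$. Following \cite{mora1,hornung}, the key fact is that the map $(\mu,\tau)\mapsto(R_{A}(1),\Gamma_A)$ is a submersion at nondegenerate $A_M$. Hence there are finitely many smooth fields $\phi_1,\dots,\phi_6$, compactly supported in the interior of $I$, such that the endpoint map restricted to $(\mu,\tau)+\sum_i s_i\phi_i$ is a local diffeomorphism near $s=0$. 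This local statement is to be made uniform along a weakly converging sequence, using that weak $L^2$ convergence of $A$ gives uniform convergence of $R_A$ and hence of the differential. Since $A_{M^n}\rightharpoonup A_M$ and $A_M$ is admissible, the endpoint data of $M^n$ converge to the target. The implicit function theorem then gives $s^n\to0$ with the perturbed pair admissible.

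The perturbation must keep the rank-one form $\lambda_np_n\otimes p_n$. On the support of the $\phi_i$, I will redefine $p_n$ and $\lambda_n$ so that $\tilde\lambda(\tilde p\cdot B')^2=\mu_n+\sum s_i\phi_{i,1}$ and $\tilde\lambda(\tilde p\cdot B')(\tilde p\cdot N)=\tau_n+\sum s_i\phi_{i,2}$. This is solvable with $\tilde p\cdot B'>0$ precisely where $\mu_n+\sum_i s_i\phi_{i,1}$ is bounded away from zero. I will therefore place the supports of the $\phi_i$ inside regions where the Step 2 construction has $|\lambda_n|(p_n\cdot B')^2\ge c>0$, which is possible because nondegeneracy gives such regions of positive measure. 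With $\mu$ bounded below there, the induced $\gamma=\tau^2/\mu$ depends continuously on $s$, so the energy changes by $o(1)$ and (iv) survives. Finally, a diagonal extraction gives a single sequence satisfying (i)–(iv).

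\textbf{Main obstacle.} I expect the hardest part to be the uniform invertibility in Step 3 for the oscillating sequence, combined with keeping $\mu_n$ bounded below on the perturbation supports. Here I would first try to show that the differential of the endpoint map depends continuously on $A$ under weak $L^2$ convergence, in the spirit of Hornung's argument \cite{hornung}.
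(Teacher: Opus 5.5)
Your Step 2 correctly uses the two-state laminate along the null direction of $\K_t\pm\alpha^\pm\D$, which is Lemma \ref{lemma:due}. Replacing the paper's moving-plane avoidance (Lemma \ref{lemma:approx5}) by a slightly non-optimal decomposition with nonzero first entries is a reasonable alternative at the $L^\infty$ level.

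The gap is in Step 3, and it comes from the smoothing you already did in Step 2. Your fields $\phi_1,\dots,\phi_6$, like Hornung's space $E$, are fixed once the limit $A_M$ and the support are fixed. So you need one fixed set, on which $A_M$ is nondegenerate, where the already $C^1$ functions $\mu_n=\lambda_n(p_n\cdot B')^2$ satisfy $|\mu_n|\ge c$ for all large $n$. Without this, the repair breaks: wherever the perturbed $\tilde\mu$ vanishes with $\tilde\tau\neq0$, no rank-one field with $\tilde p\cdot B'>0$ exists, and nearby $\tilde\gamma=\tilde\tau^2/\tilde\mu$ blows up.

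Such a set need not exist. Suppose $\det M<0$ a.e. on a subinterval $J'$. Then for large $n$ the function $\lambda_n$ must take both signs on $J'$, because a weak limit of semidefinite fields is semidefinite and so has nonnegative determinant. Hence the continuous $\mu_n$ vanishes somewhere in every fixed subinterval. Now take $M^\B_{11}\equiv0$ and $M^\B_{12}\equiv\tau\neq0$. Then $A_M$ is nondegenerate, but $\det M=-\tau^2<0$ on all of $I$. So your claim that nondegeneracy provides regions with $|\mu_n|\ge c$ is false for your sequence.

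The paper avoids this with a two-level argument, and your Steps 2 and 3 need to be reordered the same way.
\begin{itemize}
\item[(a)] It first applies Lemma \ref{lemma:perturbation_hornung} to the non-smooth sequence of Theorem \ref{teo:approx1}. The perturbations are piecewise constant and supported in an interval where $|\tilde M^{\B,n}_{11}|\ge c$ a.e., which holds despite the sign jumps. This yields admissible fields $\lambda p\otimes p$ with $\sign(\lambda)$ piecewise constant.
\item[(b)] Only afterwards, for each such fixed field, it mollifies; now the convergence is strong, to a fixed admissible limit.
\item[(c)] It then repairs again with smooth perturbations supported in an interval where $\sign(\lambda)$ is constant, so the mollified $\lambda$ stays away from zero there. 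A diagonal argument concludes.
\end{itemize}

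There is a related uniformity issue. Because Step 1 discards admissibility, your repair is applied along a diagonal in $k$ with limit $A_M$. The lower bound on $|\mu|$ on the support then has to be uniform in $k$, which you have not arranged. In your construction that bound depends on $M_k$ and on the energy-excess parameter of the perturbed decomposition. It degenerates as $M_k$ approaches $\mathrm{span}\{e_2,v(t)\}$. The paper sidesteps this by keeping every approximant admissible with uniform bounds (Lemmas \ref{lemma:3.6_Mora} and \ref{lemma:approx5}). That way Hornung's lemma is only ever applied with a fixed admissible limit.
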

The key tool we will use to create admissible $A_{M^n}$ is the following Lemma.
\begin{lemma}\label{lemma:perturbation_hornung}
    Let $A\in L^2(I;\tilde{\U})$ and assume that there is a measurable set $J\subseteq I$ of positive measure such that $A$ is not degenerate on $J$. Then every $L^2-$dense subspace of
    \[\tilde{E}\coloneqq \{\hat{A}\in L^2(I;\U):\hat{A} = 0\text{ a.e. in } I\setminus J\}\]
    contains a finite dimensional subspace $E$ such that, whenever $A_n\in L^2(I;\tilde{\U})$ converges weakly in $L^2(I;\tilde{\U})$ to $A$, then there exists a sequence $(\hat{A}_n)_n\subseteq E$ converging to zero in $E$ and such that
    \[R_{A_n+\hat{A}_n}(1) = R_A(1)\;\;\text{ and }\;\;\Gamma_{A_n+\hat{A}_n} = \Gamma_A\]
    for every $n$ large enough.
\end{lemma}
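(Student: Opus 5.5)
We prove the lemma by applying the implicit function theorem to the endpoint map. Consider
\[\Phi: L^2(I;\tilde{\U})\to SO(3)\times\R^3,\qquad \Phi(A)\coloneqq\bigl(R_A(1),\Gamma_A\bigr).\]
The solution map $A\mapsto R_A$ of \eqref{eq:cauchy_problem} is smooth from $L^2(I;\tilde{\U})$ into $W^{1,2}(I;SO(3))\subset C(\bar I;SO(3))$, so $\Phi$ is $C^1$. A key point is that $A\mapsto R_A$ is also continuous from the weak topology of $L^2$ into the uniform topology. This holds because $R_{A_n}$ is bounded in $W^{1,2}$, hence precompact in $C(\bar I)$. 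Moreover, $\int_0^t A_nR_{A_n}\to\int_0^t AR_A$, since $A_n\rightharpoonup A$ weakly and $R_{A_n}\to R_A$ strongly. Uniqueness of the limit ODE solution then identifies the limit. The same argument gives joint continuity of $(A,\hat A)\mapsto R_{A+\hat A}$ and of the derivative $D\Phi(A+\hat A)\hat B$, when $A$ carries the weak topology and $\hat A$ ranges over a finite dimensional space.

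\emph{Step 1: surjectivity of the linearization.} Variation of constants gives
\[D\Phi(A)\hat A=\Bigl(\int_0^1 R_A(1)R_A(s)^T\hat A(s)R_A(s)\ds\,\cdot\,\text{(appropriately transported)},\;-\int_0^1 R_A^T(t)\int_0^t R_A(t)R_A(s)^T\hat A(s)R_A(s)\ds\,R_A(t)^T\cdots e_1\dt\Bigr).\]
More cleanly, write $\delta R(t)=R_A(t)\int_0^t R_A^T\hat A R_A$ and $\delta\Gamma=\int_0^1\delta R^T e_1$. The tangent space of the target is $6$-dimensional. We must show that $D\Phi(A)$ restricted to $\tilde E$ is onto. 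This is the content of Hornung's results \cite{hornung} in the setting used in \cite{mora1}. Suppose $(W,v)\in\mathfrak{so}(3)\times\R^3$ annihilates the range. Then a certain $\R^{3\times3}$-valued continuous function $\Psi(s)$, built from $R_A(s)$, $W$, $v$ and $\int_s^1 R_A^Te_1$, satisfies $\Psi(s)\cdot\hat A(s)=0$ for all $\hat A$ supported in $J$ with values in $\U$. So the $(1,3)$ and $(2,3)$ skew components of $\Psi$ vanish a.e. on $J$.

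Nondegeneracy of $A$ on $J$ means $A_{13}$ or $A_{23}\ne0$ on a positive measure subset. We then differentiate these identities at Lebesgue points, using the ODE for $R_A$ and $\frac{d}{ds}\int_s^1R_A^Te_1=-R_A^Te_1$. This should force $W=0$ and $v=0$. We expect this to be the main obstacle, namely a careful algebraic bookkeeping of the annihilation conditions. The fallback is to cite the corresponding lemma of \cite{hornung}, as in \cite{mora1}, essentially verbatim, noting that the only new feature, $A_{12}=k\neq 0$, enters only through $R_A$ and not through the perturbation directions.

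\emph{Step 2: finite dimensional subspace.} Since $D\Phi(A)$ is continuous on $L^2$ and onto from $\tilde E$, its image of any dense subspace of $\tilde E$ is dense in, hence equal to, the finite dimensional target. We can therefore pick $\hat B_1,\dots,\hat B_6$ in the given dense subspace whose images form a basis, and set $E=\spn\{\hat B_i\}$.

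\emph{Step 3: uniform implicit function theorem.} Define $G_n:E\to\R^6$ in local coordinates around $\Phi(A)$ by $G_n(\hat A)=\Phi(A_n+\hat A)-\Phi(A)$. By the continuity noted above, $G_n\to G_\infty(\hat A)\coloneqq\Phi(A+\hat A)-\Phi(A)$ in $C^1$ on a neighbourhood of $0$ in $E$. The limit satisfies $G_\infty(0)=0$ and $DG_\infty(0)$ is invertible. A quantitative inverse function theorem (equivalently, a Brouwer degree argument) then yields, for $n$ large, a zero $\hat A_n$ of $G_n$ with $|\hat A_n|\le C|G_n(0)|\to0$. This gives $R_{A_n+\hat A_n}(1)=R_A(1)$ and $\Gamma_{A_n+\hat A_n}=\Gamma_A$, as claimed.
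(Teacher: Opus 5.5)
\textbf{Where the argument breaks.} Your Steps 2 and 3 are sound. Weak-to-uniform continuity of $A\mapsto R_A$, six directions picked from the dense subspace, and a uniform inverse function theorem are the standard mechanism behind results like Hornung's Theorem 3.2, which is all the paper's proof cites. The gap is Step 1. It carries the entire content of the lemma, and you leave it at ``this should force $W=0$ and $v=0$''.

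If you carry out your own computation, set $d_i:=R_A^Te_i$ and $g(s):=\int_s^1 d_1$. Then $R_A^T\hat A R_A$ has axial vector $\omega=\hat A_{13}d_2-\hat A_{23}d_1$, and the linearization is $\hat A\mapsto\bigl(\int_J\omega,\int_J g\times\omega\bigr)$. A pair $(W,v)$ annihilates its range iff
\[
\bigl(W+v\times g(s)\bigr)\cdot d_1(s)=\bigl(W+v\times g(s)\bigr)\cdot d_2(s)=0\quad\text{for a.e. }s\in J.
\]
When $A_{13}\neq0$ on a subset of $J$ of positive measure, differentiating at density points does work:
\begin{itemize}
\item[(a)] You get $A_{13}\,(W+v\times g)\cdot d_3=0$. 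Hence $W+v\times g=0$ and $d_1\times v=0$ on $\{A_{13}\neq0\}\cap J$, which forces $v=0$.
\item[(b)] Then $W\parallel d_3$ on $J$, and this forces $A_{13}=0$ a.e. on $J$ unless $W=0$.
\end{itemize}
The other alternative in Definition \ref{def:non_deg} defeats the claim. Suppose $A_{12}=A_{13}=0$ and $A_{23}\neq0$ on an interval $J$. Then $d_1\equiv\bar d$ is constant on $J$ and $g(s)=g(s_0)-(s-s_0)\bar d$ there. The pair $(W,v)=(g(s_0)\times\bar d,\bar d)\neq0$ annihilates the range. So Step 2 cannot produce six independent directions, and Step 3 has nothing to invert.

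\textbf{The statement itself fails on that branch.} This is not just a limitation of your method. Take $J=I$, $A_{12}=A_{13}=0$, $A_{23}=1$: a straight twisted strip, and $k=0$ is exactly the case $\chi=\mathrm{id}$. Then $\Gamma_A=e_1$ has norm one. Since $|R^T_{A_n+\hat A_n}e_1|=1$, the condition $\Gamma_{A_n+\hat A_n}=\Gamma_A$ forces $(A_n+\hat A_n)_{13}=0$ a.e. Now choose $A_n=A+\frac1n\phi\,(e_1\otimes e_3-e_3\otimes e_1)$, with $\phi$ outside the finite-dimensional space of $13$-components of $E$. No $\hat A_n\in E$ can then satisfy the conclusion.

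So the lemma, read with Definition \ref{def:non_deg}, needs a stronger hypothesis. One option is that $A_{13}\neq0$ on a positive-measure subset of $J$, which is exactly what your Step 1 can deliver. Another is Hornung's own nondegeneracy. The paper's one-line proof passes from Definition \ref{def:non_deg} to Hornung's notion via his Proposition 3.3-(ii), and by the example above that transfer cannot cover the $A_{23}$ branch either.

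Your fallback of citing Hornung ``essentially verbatim'' inherits exactly this unverified transfer. The feature you flag instead, $A_{12}=k$, only matters for keeping $A_n+\hat A_n$ admissible; it is not where the difficulty lies.
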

\begin{proof}
    The claim follows from \cite[Theorem 3.2]{hornung}, since if $A$ is not degenerate according to our Definition \ref{def:non_deg}, then $A$ is not degenerate also in the sense of \cite[Definition 3.1]{hornung}, because of \cite[Proposition 3.3-(ii)]{hornung}. We only point out that, since $\hat{A}^n$ takes values in $\U$, if $(A^n)_{12} = k$, then $(A^n + \hat{A}^n)_{12} = k$ as well.
\end{proof}
\begin{remark}\label{rem:uniform_convergence}
If $\tilde{E}\subseteq L^2(I;\U)\cap L^\infty(I;\U)$ then, since $E$ is finite dimensional, the sequence $\hat{A}_n$ provided by Lemma \ref{lemma:perturbation_hornung} converges to zero uniformly.
\end{remark}
Before proving Theorem \ref{teo:perturbation}, which is the original contribution of this work, we explain the strategy of the proof.

Suppose for simplicity that $\det(D) = 1$ in $I$, and let $M$ be as in Theorem \ref{teo:perturbation}. From \eqref{rem:relaxation} the functional $\F$ is the lower semicontinuous envelope of $\hat{\F},$ therefore there exists $(M^n)_n$ that verify statements (iii) and (iv) of Theorem \ref{teo:perturbation}, and such that $\det(M^n) = 0$ in $I$ for every $n$. In view of this last property there exist $\lambda_n,\,p_n$ such that $M^n = \lambda_np_n\otimes p_n.$ Statement (i) of Theorem \ref{teo:perturbation} can easily be achieved by regularizing $\lambda_n$ and $p_n$. The main contribution of this work are Lemma \ref{lemma:approx2} and Theorem \ref{teo:approx1}, where we give an explicit construction of $(M^n)_n$ that satisfies simultaneously properties (ii)-(iv) when the limit function $M$ is piecewise constant.

First we explain how to obtain properties (iii)-(iv). Suppose for simplicity that $M$ is constant on $I$ and $\det(M)>0$. The quadratic form $\qstar$ defined in \eqref{eq:qstar} is positive semidefinite on the region $\{C\in\rsym:\det(C)>0\}.$ For $t\in I$ we denote by $\qstart$ the associated quadratic form written with respect to the basis $\B$. Let $v(t)$ be an eigenvector associated with the zero eigenvalue. In Lemma \ref{lemma:approx2} we prove that there exists $s_1<0<s_2$ such that, setting $M_i(t):=M+s_iv(t)$ for $i=1, 2$, one has $\det(M_1(t)) = 0$ and $\qstart(M_1(t)) = \qstart(M) = \qstart(M_2(t))$. In the proof of Theorem \ref{teo:approx1} we show that each element $M^n$ of the sequence satisfying (iii)-(iv) can be constructed  by oscillating properly between the values $M_1(t)$ and $M_2(t)$.
    
It remains to show that the sequence $(M^n)_n$ verifies (ii). By Lemma \ref{lemma:perturbation_hornung} we can modify $A_{M^n}$ to obtain admissible functions $\tilde{A}^n$. To complete the proof, we modify $M^n$ in order to build a sequence $\tilde{M}^n$ whose elements have zero determinant, verify statements (iii), (iv), and such that $\tilde{A}^n = A_{\tilde{M}^n}$. We observe that if $\tilde{A}^n = A_{\tilde{M}^n}$ then, denoting by $\tilde{M}^{\B, n}_{ij}$ the components of $\tilde{M}^n$ with respect to the basis $\B$, one has
$\tilde{M}^{\B, n}_{11} = \tilde{A}^n_{13}$ and $M^{\B, n}_{12} = \tilde{A}^n_{23}$. Enforcing $\det(\tilde{M}^n) = 0,$ we deduce that $\tilde{M}^{\B, n}_{22} = \frac{(\tilde{A}^n_{23})^2}{\tilde{A}^n_{13}}$, whenever $\tilde{A}^n_{13}\neq 0.$ In order to guarantee that $\tilde{M}^n\weak M$ weakly in $L^2(I;\rsym)$ and $\F(\tilde{M}^n)\to\F(M),$ it is crucial to ensure that $\tilde{A}^n_{13}$ is uniformly bounded away from zero. Since $\tilde{A}^n_{13}$ is a perturbation of $A^n_{13} = M^{\B,n}_{11},$ this condition amounts to require that $M^{\B,n}_{11}$ is uniformly bounded away from zero. 
    
For $t\in I$ we identify $v(t)$ with the vector $(v_{11}(t), v_{22}(t), 2v_{12}(t))$, and we denote by $\pi(t):=\text{span}\{e_2, v(t)\} = \{(x, y, z)\in\R^3:\alpha(t)x+\beta(t)z = 0\}$ for some functions $\alpha(t)$ and $\beta(t)$. In the proof of Theorem \ref{teo:approx1} we show that, if 
    \begin{equation}\label{eq:approx_assumptions}
        |M^{\B}_{11}|\ge c\; \text{ a.e. in }I,\;\text{ and if }\;|\alpha M^{\B, n}_{11}+2\beta M^{\B, n}_{12}|\ge c\;\text{ a.e. in }I,
    \end{equation} 
    then it is possible to construct $M^n$ in such a way that also $M^{\B, n}_{11}$ is uniformly bounded away from zero. In Lemma \ref{lemma:approx5} we show that by approximation we can always reduce to the case where \eqref{eq:approx_assumptions} holds.

\subsection{Avoiding a moving plane through perturbation}
\label{subsec:avoiding_plane}
The goal of this section is to prove Lemma \ref{lemma:approx5}, which is a technical approximation result. This will be useful in the proof of Theorem \ref{teo:perturbation} to reduce the argument to a map $M$ satisfying some additional properties (see the discussion after Remark \ref{rem:uniform_convergence} for more details).

For $(\alpha, \beta)\in\R^2\setminus\{(0, 0)\}$ we call
    \[\pi_{\alpha, \beta}\coloneqq\{(x, y, z)\in\R^3: \alpha x+\beta z = 0 \}\]
which is a plane in $\R^3$ containing the $y$ axis. For $c>0$ we set
\begin{equation}\label{eq.2}
    \tilde{\U}^{(\alpha, \beta)}_c\coloneqq
\{ A\in\tilde{\mathcal{U}}:|\alpha A_{13}+2\beta A_{23}|\ge c\}\;\text{ and }\;\tilde{\U}_c:=\{A\in\tilde{\mathcal{U}}:|A_{13}|\ge c \text{ and }|A_{23}|\ge c\}.\}
\end{equation}
In this subsection we assume that four functions $\alpha^\pm, \beta^\pm\in C(\bar{I})$ and a constant $r>0$ are given, such that 
\begin{equation}
    ||\bigl(\alpha^+(t),\beta^+(t)\bigl)||>r \;\text{ and }\;||\bigl(\alpha^-(t),\beta^-(t)\bigl)||> r \;\text{ for any }t\in I.
\end{equation} These functions identify two moving planes $t\mapsto\pi_{\alpha^+(t), \beta^+(t)}$ and $t\mapsto\pi_{\alpha^-(t), \beta^-(t)}$. 

Finally, we denote by $\mathcal{P}(I;X)$ the space of piecewise constant functions on the interval $I$ taking values in some space $X$ (in case $X=\R$ we simply write $\mathcal{P}(I))$. Moreover, we define $\mathcal{P}^\B(I;\R^2)\coloneqq\{u\in L^\infty(I;\R^2):u^\B\in\mathcal{P}(I;\R^2)\}$, and $\mathcal{P}^\B(I;\rsym)\coloneqq\{M\in L^\infty(I;\rsym):M^\B\in\mathcal{P}(I;\rsym)\}.$ With an abuse of notation, for admissible maps $A\in L^\infty(I;\tilde{\U})$, we write $A\in \mathcal{P}(I;\tilde{\U})$ if $A_{13}$ and $A_{23}$ belong to $\mathcal{P}(I)$, despite the fact that $A_{12} = k\not\in\mathcal{P}(I).$

Before stating the main lemma of this section, we state and prove the following auxiliary result.
\begin{lemma}\label{lemma:3.6_Mora}
    \begstat{} Let $A\in L^2(I;\tilde{\U})$ be nondegenerate and admissible for data $\y$ and $\RR$. Then, there exist $c_n>0$ and admissible $A^n\in\mathcal{P}(I;\tilde{\U}_{c_n})$ such that $A^n\to A$ strongly in $L^2(I;\tilde{\U}).$
\end{lemma}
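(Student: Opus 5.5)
Approximate $A$ by piecewise constant maps bounded away from zero, then restore admissibility with Lemma \ref{lemma:perturbation_hornung}, choosing the finite-dimensional correction space to consist of piecewise constant functions so that the corrected maps stay piecewise constant.

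\textbf{Step 1 (piecewise constant approximation).} Write $A_{13}=a$ and $A_{23}=b$ in $L^2(I)$. Since piecewise constant functions are dense in $L^2(I)$, choose $a_n,b_n\in\mathcal{P}(I)$ with $a_n\to a$ and $b_n\to b$ in $L^2$. On each interval of constancy where $|a_n|<1/n$, replace the value of $a_n$ by $1/n$ if it is nonnegative and by $-1/n$ otherwise; modify $b_n$ in the same way. This changes each function by at most $1/n$ in $L^\infty$, so strong $L^2$ convergence is preserved. We obtain $|a_n|\ge 1/n$ and $|b_n|\ge 1/n$ everywhere. Define $B_n\in L^2(I;\tilde\U)$ by $(B_n)_{12}=k$, $(B_n)_{13}=a_n$, $(B_n)_{23}=b_n$. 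Then $B_n\to A$ strongly, hence weakly, in $L^2$, but $B_n$ is in general not admissible.

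\textbf{Step 2 (restoring admissibility).} Since $A$ is nondegenerate on $J=I$, we apply Lemma \ref{lemma:perturbation_hornung} with $\tilde E=L^2(I;\U)$ and the $L^2$-dense subspace $\mathcal{P}(I;\U)$ of $\U$-valued piecewise constant maps. The lemma gives a finite dimensional $E\subseteq\mathcal{P}(I;\U)$ and $\hat A_n\in E$ with $\hat A_n\to0$ in $E$ such that $R_{B_n+\hat A_n}(1)=R_A(1)$ and $\Gamma_{B_n+\hat A_n}=\Gamma_A$ for $n$ large. Because $\hat A_n$ takes values in $\U$, we have $(B_n+\hat A_n)_{12}=k$. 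Since $A$ is admissible, $R_A^T(1)=\bar R^T(B'(1)|N(1)|e_3)$ and $\Gamma_A=\bar y$, so $A^n\coloneqq B_n+\hat A_n$ is admissible. Moreover $E\subseteq L^\infty$ is finite dimensional, so by Remark \ref{rem:uniform_convergence} we get $\|\hat A_n\|_\infty\to0$. Therefore $A^n\to A$ strongly in $L^2$, and the entries $A^n_{13},A^n_{23}$ are piecewise constant, being sums of piecewise constant functions.

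\textbf{Step 3 (lower bounds).} This step is the main obstacle: the perturbation $\hat A_n$ may push $|a_n|$ back toward zero. We fix this by ordering the choices: $E$ depends only on $A$, not on $B_n$, so it is fixed before the modification in Step 1 is made. The plan is then as follows.
\begin{itemize}
\item Pick the Step 1 approximants first and get $\hat A_n$ with $\delta_n\coloneqq\|\hat A_n\|_\infty\to0$.
\item Choose thresholds $\eta_n$ with $\eta_n\to0$ and $\eta_n\ge 2\delta_n+1/n$, and push the values of $a_n,b_n$ lying in $(-\eta_n,\eta_n)$ out to $\pm\eta_n$.
\item This changes $B_n$ by at most $\eta_n$ uniformly, so the sequence still converges weakly to $A$. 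Applying the lemma again yields a new correction, still controlled in sup norm by $E$.
\end{itemize}
The new correction may differ from $\hat A_n$, which could make the argument circular. To avoid this, we use the quantitative form of Hornung's result: the correction is a continuous (implicit-function) function of $B_n$ in a weak neighbourhood. Uniform perturbations of size $\eta_n$ then change the correction by $o(1)$, uniformly in $L^\infty$. Alternatively, we iterate Step 1 with thresholds $\eta_n=3\sup_{m\ge n}\delta_m+1/n$.

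Either way, every entry of $A^n$ satisfies $|A^n_{13}|,|A^n_{23}|\ge\eta_n-\|\hat A_n\|_\infty\ge 1/n\eqqcolon c_n$. Hence $A^n\in\mathcal{P}(I;\tilde\U_{c_n})$, as required.
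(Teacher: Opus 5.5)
Your Steps 1 and 2 are fine, but Step 3, which you correctly identify as the crux, does not close.

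You apply Lemma~\ref{lemma:perturbation_hornung} with $\tilde E=L^2(I;\U)$, so the correction acts on all of $I$. That includes the cells where $|a_n|$ or $|b_n|$ sits exactly at the threshold, and nothing prevents it from cancelling them there. Pushing the values in $(-\eta_n,\eta_n)$ out to $\pm\eta_n$ changes $B_n$ by up to $\eta_n$ on the pushed set $S_n$. It therefore moves $R_{B_n}(1)$ and $\Gamma_{B_n}$ by an amount of order $\eta_n|S_n|$, and $|S_n|$ need not be small: nondegeneracy allows, e.g., $A_{13}=0$ on a set of positive measure. Even granting the quantitative implicit-function version of Hornung's result, the new correction $\hat A_n'$ is then only bounded by something of the form $C(\delta_n+\eta_n|S_n|)$. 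Here $C$ comes from inverting the differential of the endpoint map on the finite-dimensional space $E$, and you have no control on it; it may well exceed $1$.

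So the fact that the correction changes by $o(1)$ is beside the point: what you need is $\|\hat A_n'\|_\infty<\eta_n-c_n$. Your final inequality $|A^n_{13}|\ge\eta_n-\|\hat A_n\|_\infty$ uses the norm of the \emph{old} correction, whereas the map you actually add is $\hat A_n'$. The alternative choice $\eta_n=3\sup_{m\ge n}\delta_m+1/n$ is circular for the same reason: the $\delta_m$ change as soon as the sequence is pushed.

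The missing idea is to separate the set where the correction acts from the set where you push, and to have a lower bound independent of $n$ on the former. This is what the paper does, following \cite[Lemmas 3.4--3.6]{mora1}. Nondegeneracy only guarantees that one of $A_{13},A_{23}$ is nonzero on a set of positive measure. So first $A$ is perturbed, again via Lemma~\ref{lemma:perturbation_hornung}, into admissible maps with $A_{13}A_{23}\neq0$ on a fixed set of positive measure. Each such map therefore has $|A_{13}|,|A_{23}|\ge c>0$ on some fixed set $J$. Then the entries are pushed away from zero only on $I\setminus J$ and kept of modulus at least $c$ on $J$. Admissibility is restored with $\tilde E$ consisting of maps vanishing outside $J$ (piecewise constant ones at the last stage). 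Off $J$ the correction changes nothing, and on $J$ a uniformly vanishing correction can only lower $c$ to $c/2$. A diagonal argument then gives the constants $c_n$. This is the same device the paper uses in the second step of the proof of Lemma~\ref{lemma:approx5}.

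Your global correction could in principle be rescued another way: make the push invisible to the endpoint map at order $\eta_n$. For instance, oscillate finely between $\pm\eta_n$ with local average equal to the old value, so that the defect changes only by $o(\eta_n)$, and take $\eta_n$ large compared with the defect of $B_n$. Your proposal contains no such argument.
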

\begin{proof}
    The proof is a straightforward adaptation of \cite[Lemma 3.6]{mora1}. We only sketch the main steps.
    \begin{itemize}
        \item[(i)] There exist admissible $A^n\in L^2(I;\tilde{\U})$ such that $A^n\to A$ strongly in $L^2(I;\tilde{\U})$ and $A^n_{13}\cdot A_{23}^n\neq 0$ on a set of positive measure independent of $n$.
        \item[(ii)] There exist admissible $A^n\in L^2(I;\tilde{\U}_{1/n})$ such that $A^n\to A$ strongly in $L^2(I;\tilde{\U}).$
    \end{itemize}
    The first result can be proved as \cite[Lemma 3.4]{mora1} invoking Lemma \ref{lemma:perturbation_hornung} instead of \cite[Lemma 3.2]{mora1}. Analogously, (ii) can be proved as \cite[Lemma 3.5]{mora1} invoking (i) instead of \cite[Lemma 3.4]{mora1} and Lemma \ref{lemma:perturbation_hornung} instead of \cite[Lemma 3.2]{mora1}. We only point out that given $A_n\rightharpoonup A$ weakly in $L^2(I;\tilde{\U})$, Lemma \ref{lemma:perturbation_hornung}
    allows one to find a perturbed sequence 
     $A_n+\hat{A}_n$ such that $\hat{A}_n$ belongs to $L^2(I;\U).$ In particular this implies $(A_n+\hat{A}_n)_{12} = (A_n)_{12}$ a.e. on $I$, which is a necessary condition for the perturbed sequence to be admissible.
\end{proof}

We now state the main approximation Lemma. For simplicity, we write $A\in\mathcal{P}\bigl(I;\tilde{\U}_c\cap\tilde{\U}^{(\alpha^+, \beta^+)}_{c_n}\cap\tilde{\U}^{(\alpha^-, \beta^-)}_{c_n}\bigl)$ meaning that $A(x)\in \tilde{\U}_c\cap\tilde{\U}^{(\alpha^+(x), \beta^+(x))}_{c_n}\cap\tilde{\U}^{(\alpha^-(x), \beta^-(x))}_{c_n}$ for all $x\in I$. 
\begin{lemma}\label{lemma:approx5}
    \begstat{} Let $M\in L^2(I;\rsym)$ be such that $A_M$ is nondegenerate ad admissible for the data $\y$ and $\RR$. Then there exist constants $c_n>0$ and $M^n\in\mathcal{P}^\B(I;\rsym)$ such that $A_{M^n}$ is admissible for all $n$, $A_{M^n}\in \mathcal{P}(I;\tilde{\U}_{c_n}\cap\tilde{\U}^{(\alpha^+, \beta^+)}_{c_n}\cap\tilde{\U}^{(\alpha^-, \beta^-)}_{c_n})$ for every $x\in I$, and $M^n\to M$ strongly in $L^2(I;\rsym).$ In particular, $\F(M^n)\to\F(M)$ as $n\to\infty.$
\end{lemma}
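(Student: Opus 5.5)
The plan is to build $M^n$ from the admissible matrices supplied by Lemma \ref{lemma:3.6_Mora}, perturb them away from the two moving planes, and then restore admissibility with Lemma \ref{lemma:perturbation_hornung}.

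\textbf{Reduction to a statement about $A$.} Since $A_M$ only records $M^\B_{11}$ and $M^\B_{12}$, we work at the level of $A$ and reconstruct $M$ afterwards. Given an admissible $A\in\mathcal{P}(I;\tilde{\U})$, we set
\[
M'\coloneqq A_{13}B'\otimes B'+A_{23}(B'\otimes N+N\otimes B')+\gamma N\otimes N ,
\]
where $\gamma$ is a piecewise constant approximation in $L^2$ of $M^\B_{22}$. Then $M'\in\mathcal{P}^\B(I;\rsym)$ and $A_{M'}=A$. Moreover, since $B'$ and $N$ are continuous orthonormal fields, $\|M'-M\|_{L^2}$ is controlled by $\|A-A_M\|_{L^2}+\|\gamma-M^\B_{22}\|_{L^2}$. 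Hence it suffices to find admissible $A^n\in\mathcal{P}(I;\tilde{\U}_{c_n}\cap\tilde{\U}^{(\alpha^+,\beta^+)}_{c_n}\cap\tilde{\U}^{(\alpha^-,\beta^-)}_{c_n})$ with $A^n\to A_M$ strongly in $L^2$. The convergence $\F(M^n)\to\F(M)$ then follows from strong $L^2$ convergence, because $Q^{**}$ has quadratic growth and is continuous, and $\det D$ is bounded (dominated or Vitali convergence).

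\textbf{Step 1.} Lemma \ref{lemma:3.6_Mora} gives admissible $A^n\in\mathcal{P}(I;\tilde{\U}_{c_n})$ with $A^n\to A_M$. It remains to move the pair $(A^n_{13},A^n_{23})$ off the two moving lines $\alpha^\pm(t)x+2\beta^\pm(t)z=0$ while keeping $A^n$ piecewise constant and admissible.

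\textbf{Step 2 (perturbation).} Fix $n$ and a partition of $I$ on which $A^n$ is constant, with value $(a,b)$ on an interval $J_k$. The coefficients $\alpha^\pm,\beta^\pm$ are continuous and $\|(\alpha^\pm,\beta^\pm)\|>r$, so after refining the partition we may regard the two lines as almost constant on each $J_k$. We then choose a constant perturbation $(a',b')$ with $|(a',b')-(a,b)|<\delta$ such that:
\begin{itemize}
\item[(a)] $(a',b')$ keeps distance at least $c$ from both lines $\{\alpha^\pm(t)x+2\beta^\pm(t)z=0\}$ for all $t\in \bar J_k$;
\item[(b)] $|a'|,|b'|\ge c_n/2$.
\end{itemize}
This is possible because, for fixed $\delta$, the set of directions swept by each line over $\bar J_k$ is small once $J_k$ is short, by uniform continuity; since $\|(\alpha,\beta)\|>r$, the normalized normals vary continuously. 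The excluded region in a $\delta$-ball is therefore a small-angle double cone around each line, and its complement has nonempty interior. This step, making (a) uniform in $t$ while keeping the modification piecewise constant, is the main obstacle. My fallback is to subdivide $J_k$ so finely that the line direction varies by less than a fixed angle, and to pick $(a',b')$ in a fixed sector avoiding both cones on each subinterval.

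The resulting $\tilde A^n$ differs from $A^n$ by at most $\delta$ in $L^\infty$, so with $\delta=\delta_n\to0$ we still have convergence to $A_M$. However, $\tilde A^n$ is not admissible.

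\textbf{Step 3 (restoring admissibility).} We apply Lemma \ref{lemma:perturbation_hornung} to the nondegenerate limit $A_M$, whose nondegeneracy is assumed. We choose the dense subspace of $\tilde E$ to consist of piecewise constant $\U$-valued functions; this is dense in $L^2(I;\U)$. The lemma gives a finite-dimensional $E$ contained in this subspace and corrections $\hat A_n\in E$ with $\hat A_n\to0$, uniformly by Remark \ref{rem:uniform_convergence}, such that $\tilde A^n+\hat A_n$ has the same endpoint data as $A_M$. Since $\hat A_n$ is $\U$-valued, the $(1,2)$-entry stays equal to $k$, so $\tilde A^n+\hat A_n$ is admissible. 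The corrections are piecewise constant, so the sum remains piecewise constant.

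Because $\|\hat A_n\|_\infty\to0$ while the margins in (a) and (b) are fixed at a level $c$ depending on $n$, we choose the perturbation margins first and then pass to a diagonal subsequence along which $\|\hat A_n\|_\infty$ is smaller than half the margins. This keeps $\tilde A^n+\hat A_n$ in $\tilde{\U}_{c_n}\cap\tilde{\U}^{(\alpha^\pm,\beta^\pm)}_{c_n}$ after relabelling $c_n$. Strong convergence to $A_M$ is preserved. Reconstructing $M^n$ as in the first paragraph then concludes the proof.
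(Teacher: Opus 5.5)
Your reduction, Step 1 and Step 2 are fine. The ``fallback'' in Step 2 is essentially how the paper builds its perturbation: it uses a partition fine enough that the moving lines are almost frozen on each piece. Step 3, however, contains a genuine gap. You say you will fix the margins and then ``pass to a diagonal subsequence along which $\|\hat A_n\|_\infty$ is smaller than half the margins.'' This is not available, because the margin and the correction belong to the same index and are of the same order.

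For a given $\tilde A^n$, the correction $\hat A_n$ is one fixed element of $E$. Its size is dictated by the endpoint defect $(R_{\tilde A^n}(1),\Gamma_{\tilde A^n})-(R_{A_M}(1),\Gamma_{A_M})$. Since $A^n$ is admissible, this defect is of order $\delta_n$, your perturbation size, and in general no smaller. Lemma \ref{lemma:perturbation_hornung} gives no rate at all, only $\hat A_n\to0$. The margins in (a), on the other hand, must tend to zero. For instance, if $\alpha^+A_{13}+2\beta^+A_{23}\equiv0$ on an interval for $A=A_M$ (exactly the case where a perturbation is needed), strong $L^2$ convergence forces this. In your construction these margins are of order $\delta_n$ wherever $A^n$ lies on one of the lines. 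So $\hat A_n$ may exceed the margin for every $n$, and passing to a subsequence cannot change the ratio.

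Your $E$ is supported on all of $I$, so the correction acts precisely where the margin is only $O(\delta_n)$. The sum is piecewise constant while the lines move continuously, so the correction can put $(A_{13},A_{23})$ back on $\{\alpha^\pm x+2\beta^\pm z=0\}$ at some point. The same objection applies to (b) when $c_n\to0$. That part is easy to repair: first reduce, by a diagonal argument, to $A_M\in\mathcal{P}(I;\tilde{\U}_c)$ with $c$ fixed, as the paper does.

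The missing idea is to separate the support of the admissibility correction from the region where the small margin is created. The paper proceeds in three stages:
\begin{itemize}
\item[(i)] It perturbs $A_M$ only on a short interval $J$ and restores admissibility with maps vanishing on $J$. This gives an approximant lying off both moving planes on a subinterval $J_1$. Freezing one such approximant by a diagonal argument, its margin $c$ on $J_1$ no longer depends on the next approximation parameter.
\item[(ii)] It perturbs only on $J_2=I\setminus J_1$, which gives margin $1/n$ there. It then applies Lemma \ref{lemma:perturbation_hornung} with $\tilde E$ consisting of piecewise constant maps vanishing on $J_2$. This is legitimate because $A\in\tilde{\U}_c$ is nondegenerate on $J_1$. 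The values on $J_2$ are untouched by the correction, and on $J_1$ the fixed margin $c$ absorbs the uniformly vanishing $\hat A^n$.
\item[(iii)] It repeats the procedure for $(\alpha^-,\beta^-)$, using uniform convergence to keep the margin already obtained for $(\alpha^+,\beta^+)$.
\end{itemize}
Without some such separation, your Step 3 does not yield $A_{M^n}\in\tilde{\U}^{(\alpha^\pm,\beta^\pm)}_{c_n}$.
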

\begin{proof}
    By Lemma \ref{lemma:3.6_Mora} there exists $(c_n)_n$ and admissible $(A^n)_n$ with $A^n\in\mathcal{P}(I;\tilde{\U}_{c_n})$ such that $A^n\to A_M$ strongly in $L^2(I;\tilde{\U})$. We consider an approximating sequence $(M^n)_n$ defined by     \begin{equation}\label{eq:4.30}
        M^{\B,n}:=\begin{pmatrix}
        A^n_{13}&A^n_{23}\\
        A^n_{23}&M^{\B, n}_{22}
    \end{pmatrix},
    \end{equation}
    where $M^{\B, n}_{22}\in\mathcal{P}(I)$ is chosen such that $M^{\B, n}_{22}\to M^\B_{2, 2}$ strongly in $L^2(I).$ Since $A^n\to A$, $M^n\to M$ strongly in $L^2(I;\rsym),$ and, as a consequence, $\F(M^n)\to\F(M).$By a diagonal argument we can therefore suppose that $M^\B_{22}\in\mathcal{P}(I)$ and $A_M\in\mathcal{P}(I;\tilde{\U}_c)$ for some $c>0$. 
    
    Since $A_M$ does not depend on $M^\B_{22},\,$ we can choose the approximating sequence $M^n$ so that $M^{\B,n}_{22} = M^\B_{22}$ for every $n$. In particular, if we find a sequence $(A^n)_n$ with all the desired properties strongly converging to $A_M$, then the lemma is proved by defining $M^n$ as in \eqref{eq:4.30}, with $M^\B_{22}$ in place of $M^{\B,n}_{22}$.

    \textbf{First step.} We rename $A:=A_M$. We claim that there exists a subinterval $J\subseteq I$, a constant $\tilde{c}>0$ and admissible $\tilde{A}^n\in \mathcal{P}(I;\tilde{\U}_{\tilde{c}})$, such that $\tilde{A}^n\to A$ strongly in $L^2(I;\tilde{\U})$ and such that for $n$ sufficiently large it holds $\tilde{A}^n(x)\in \tilde{\U}^{(\alpha^+(x), \beta^+(x))}_{1/n}\cap\tilde{\U}^{(\alpha^-(x), \beta^-(x))}_{1/n}$ for a.e. $x\in J$.

    For every subset $J\subseteq I$ such that $A_{13}$ and $A_{23}$ are constant on $J$, we define 
    \begin{equation}\label{eq:inf}
    \mathcal{I}^+_J\coloneqq\inf\{|A_{13}\alpha^+(x)+2A_{23}\beta^+(x)|:x\in J\}, \text{ and }\mathcal{I}^-_J\coloneqq\inf\{|A_{13}\alpha^-(x)+2A_{23}\beta^-(x)|:x\in J\}.
    \end{equation}
    By \eqref{eq:inf} and the uniform continuity of $\alpha^\pm$ and $\beta^\pm$ there exist $\delta>0$ such that, if $J\subseteq I$ is an interval with length smaller than $\delta,$ then either $|\alpha^+(x)|\ge r/2$ for every $x\in J$ or $|\beta^+(x)|\ge r/2$ for every $x\in J$, and also $|\alpha^-(x)|\ge r/2$ for every $x\in J$ or $|\beta^-(x)|\ge r/2$ for every $x\in J$.
    
    Suppose first that there exists an interval $J$ such that $|J|<\delta$ and (without loss of generality) $\mathcal{I}^+_J>0.$ Now, if there exists $x\in J$ such that $A_{13}\alpha^-(x)+2A_{23}\beta^-(x)\neq 0$ on $J$ then, up to taking a smaller interval, we can assume that also $\mathcal{I}^-_J>0$. In this case we set $\tilde{A}^n\coloneqq A$ and the claim is proved.
    Otherwise, we have that $A_{13}\alpha^-(x)+2A_{23}\beta^-(x) = 0$ for every $x\in J$. Without loss of generality, we can assume that $\alpha^-\ge r/2$ on $J$. We define $\tilde{A}^n\in L^2(I;\tilde{\U})$as
    \[\tilde{A}^n_{13} = A_{13}+\frac{2}{rn}\chi_J,\]
    keeping the other components of $A$ unchanged. In this way we have that for $n$ sufficiently large and for $x\in J$
    \begin{equation}\label{eq:3}
    |\tilde{A}^n_{13}\alpha^++2\tilde{A}^n_{23}\beta^+|\ge 1/n,\;|\tilde{A}^n_{13}\alpha^-+2\tilde{A}^n_{23}\beta^-|\ge 1/n,\;\text{ and }|\tilde{A}^n_{13}|>c/2.
    \end{equation}
    We are only left to discuss the case where
    \[A_{13}\alpha^+(x)+2A_{23}\beta^+(x) = 0\text{ and }A_{13}\alpha^-(x)+2A_{23}\beta^-(x)=0\text{ for every }x\in I.\]
    Let $J$ be an interval where $A$ is constant. Up to flipping the sign of the vector $(\alpha^+(x), \beta^+(x))\in\R^2$, there exists a vector $v\in\R^2$, orthogonal to $(A_{13}, 2A_{23})$ and such that $\norma{v} = 1,$ $(\alpha^+(x), \beta^+(x)) = \norma{(\alpha^+(x), \beta^+(x))}v,$ and $(\alpha^-(x), \beta^-(x)) = \norma{(\alpha^-(x), \beta^-(x))}v$ for every $x\in J$.

    We define $\tilde{A}^n\in L^2(I;\tilde{\U})$as
    \[(\tilde{A}^n_{13}, \tilde{A}^n_{23})\coloneqq \bigl(A_{13}, A_{23}\bigl)+\chi_J\frac{1}{nr}v\]
    keeping the other components of $A$ unchanged. 
    Also in this case for $n$ sufficiently large and for $x\in J$ it holds
    \begin{equation}\label{eq:280}
    |\tilde{A}^n_{13}\alpha^++2\tilde{A}^n_{23}\beta^+|\ge 1/n,\;|\tilde{A}^n_{13}\alpha^-+\tilde{A}^n_{23}\beta^-|\ge 1/n,\;\text{ and }|\tilde{A}^n_{13}|>c/2.
    \end{equation}
    Clearly, $(\tilde{A}^n_{13}, \tilde{A}^n_{23})$ converges uniformly to $(A_{13}, A_{23}).$

    To conclude the first step, we modify $\tilde{A}^n$ to make it admissible.
    Let $\tilde{E}$ be the set of maps in $\mathcal{P}(I;\U)$ that vanish a.e. on $J$. By Lemma \ref{lemma:perturbation_hornung} there exist $\hat{A}^n\in\Tilde{E}$ converging to zero uniformly and such that $A^n \coloneqq \tilde{A}^n+\hat{A}^n$ is admissible (since $(\hat{A}^n)_{12}=0$ and $(\tilde{A}^n)_{12} = A_{12} = k$). By construction $A^n(x) = \tilde{A}^n(x)$ for every $x$ in $J$, and by  (\ref{eq:3}) and (\ref{eq:280}) $ \tilde{A}^n_{|_J}\in \mathcal{P}(J;\mathcal{U}^{(\alpha^+, \beta^+)}_{1/n}\cap\mathcal{U}^{(\alpha^-, \beta^-)}_{1/n})$ as required. Finally, by uniform convergence $A^n\in\mathcal{P}(I;\tilde{\U}_{\tilde{c}})$ for some $\tilde{c}>0$.
    
    \textbf{Second step.} In view of the first step, we can assume that $A_M =: A \in \mathcal{P}(I;\tilde{\U}_c)$, and that there is $J_1\subseteq I$ such that for a positive constant $c>0$ it holds $A(x)\in \tilde{\U}^{(\alpha^+(x), \beta^+(x))}_{c}\cap\tilde{\U}^{(\alpha^-(x), \beta^-(x))}_{c}$ for every $x\in J_1$. We prove that there exist a constant $\tilde{c}>0$, constants $(c_n)_n$, and an approximating sequence $A^n\in\mathcal{P}(I;\tilde{\U}_{\tilde{c}}\cap\tilde{\U}^{(\alpha^+,\beta^+)}_{c_n})$ such that $A^n$ is admissible for all $n$.

    We set $J_2 = I\setminus J_1$ and $\delta$ as in the first step. For every $n$ we consider a partition $\mathbb{P}_n$ of $J_2$ into intervals of length smaller than $\delta$ such that $A$ is constant on each of them and for every $J\in\mathbb{P}_n$
    \begin{equation}\label{eq:inf_sup}
    \Bigl|A_{13}\alpha^+(x)+2A_{23}\beta^+(x)-A_{13}\alpha^+(y)-2A_{23}\beta^+(y)\Bigl|\le \frac{1}{n^2}\quad\text{for every }x, y\in J.
    \end{equation}
    
    Now, for $n\in \N$, we define $\tilde{A}^n$ so that it is constant on each $J\in\mathbb{P}_n.$ More precisely, for every $J\in\mathbb{P}_n,$ let $A^J_{13}$ and $A^J_{23}$ be such that $(A_{13}, A_{23})\equiv (A^J_{13}, A^J_{23})$ a.e. on $J$, and define $\mathcal{I}^+_J$ as in (\ref{eq:inf}). On the interval $J$, we define
    \[(\tilde{A}^n_{13}, \tilde{A}^n_{23}) = \begin{cases}
        \Bigl(A_{13}^J+\frac{6}{rn}, A_{23}^J\Bigl)&\text{ if }|\alpha^+|\ge r/2\text{ on }J \text{ and }\mathcal{I}^+_J< 1/n,\\
        (A_{13}^J, A_{23}^J+\frac{3}{rn})&\text{ if }|\alpha^+(x)|<r/2\;\text{ for some }x\in J\text{ and } \mathcal{I}^+_J< 1/n,\\
        \Bigl(A_{13}^J, A_{23}^J\Bigl)&\text{ if }\mathcal{I}^+_J\ge 1/n.\end{cases}
        \]
    The other components of $A$ are left unchanged. On $J_1$ we define $\tilde{A}^n = A$. 
    
    Since $A$ is constant on each $J\in\mathbb{P}_n$ we have that $\tilde{A}^n$ is also constant on each $J$. Moreover, $\tilde{A}^n\to A$ uniformly on $I$, and for every $J\in\mathbb{P}_n$ and for every $x\in J$ it holds $|\tilde{A}^n_{13}\alpha^+(x)+2\tilde{A}^n_{23}\beta^+(x)|\ge 1/n$ for all $n$ sufficiently large. Indeed, if $\mathcal{I}^+_J\ge1/n$ there is nothing to prove. If instead $|\alpha^+|\ge r/2$ on $J$ and $\mathcal{I}^+_J<1/n$ then 
    \begin{equation}\label{eq:4.34}
        |\tilde{A}^n_{13}\alpha^+(x)+2\tilde{A}^n_{23}\beta^+(x)|\ge \frac{3}{n}\frac{2}{r}|\alpha^+(x)|-\sup\{|A_{13}^J\alpha^+(x)+2A_{23}^J\beta^+(x)|:x\in J\}\ge \frac{3}{n}-\frac{1}{n}-\frac{1}{n^2}\ge 1/n,
    \end{equation}
    where in the last inequality we used \eqref{eq:inf_sup} to estimate the $\sup$.
    Finally, if $|\alpha^+(x)|<r/2$ for some $x\in J$, then our choice of $\delta$ ensures that $|\beta^+|\ge r/2$ on $J$. Therefore we can argue as in \eqref{eq:4.34}, replacing $\frac{3}{n}\frac{2}{r}|\alpha^+(x)|$ with $\frac{3}{n}\frac{2}{r}|\beta^+(x)|$.
    
    Now let $\tilde{E}$ be the set of maps in $\mathcal{P}(I;\mathcal{U})$ that vanish on $J_2$. By Lemma \ref{lemma:perturbation_hornung} there exist $\hat{A}^n\in\tilde{E}$ converging to zero uniformly and such that $A^n\coloneqq \tilde{A}^n+\hat{A}^n$ is admissible. By construction we have $A^n(x) = \tilde{A}^n(x)\in \tilde{\U}^{(\alpha^+(x), \beta^+(x))}_{1/n}$ for $x\in J_2$.
    For $x\in J_1$ it holds $\tilde{A}^n = A\in \tilde{\U}^{(\alpha^+, \beta^+)}_{c}\cap\tilde{\U}^{(\alpha^-, \beta^-)}_{c}.$ Since $\hat{A}^n$ converges uniformly to zero we have that
    \[\Bigl|\bigl(\tilde{A}^n_{13}+\hat{A}^n_{13}\bigl)\alpha^++2\bigl(\tilde{A}^n_{23}+\hat{A}^n_{23}\bigl)\beta^+\Bigl|\ge \Bigl|\tilde{A}^n_{13}\alpha^++\tilde{A}^n_{23}\beta^+\Bigl| - \Bigl|\hat{A}^n_{13}\alpha^++\hat{A}^n_{23}\beta^+\Bigl|\ge c/2\text{ on }J_1\]
    for $n$ sufficiently large. The step is proved by defining the approximating sequence $(M^n)_n$ as in (\ref{eq:4.30}).
    
    \textbf{Third step.} In view of the argument above, by approximation we can assume that there exists a constant $c>0$ such that
    $A_M=:A\in\mathcal{P}(I;\tilde{\U}_{c}\cap\tilde{\U}^{(\alpha^+(x), \beta^+(x))}_c)$. To conclude the proof we just need to repeat the second step above with $\alpha^-$ and $\beta^-$ in place of $\alpha^+$ and $\beta^+$, using the fact that the approximating sequence $\tilde{A}^n$ converges to $A$ uniformly.
\end{proof}

\subsection{A technical approximation result}\label{subsec:technical_approx_result}
We prove here a technical lemma which will be useful in the proof of Theorem \ref{teo:approx1}.

\begin{lemma}\label{lemma:approx2}
    Let $u\in L^2(I)$. Assume that $|u(x)|\ge c$ for a.e. $x\in I,$ for some constant $c>0$. Then there is a sequence $(u_n)_n\subset \mathcal{P}(I)$ strongly converging to $u$ in $L^2(I)$ such that $|u_n|\ge c$ on $I_n$ and $u_n = 0$ on $I\setminus I_n$ for $n$ sufficiently large. Moreover, if $u\in L^\infty(I)$ we can choose $(u_n)_n\subset L^\infty(I)$ such that $\norma{u_n}_{L^\infty}\le\norma{u}_{L^\infty}.$
\end{lemma}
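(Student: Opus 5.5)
The statement is a refinement of the density of step functions in $L^2(I)$, and I read it as follows: $I_n\subseteq I$ are sets (finite unions of intervals) with $|I\setminus I_n|\to 0$, and each $u_n$ is a step function with $|u_n|\ge c$ on $I_n$ and $u_n=0$ off $I_n$. My plan is to take any approximating sequence of step functions and then repair it only where it falls below the threshold $c$.

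\textbf{Step 1 (approximation).} Pick $v_n\in\mathcal{P}(I)$ with $v_n\to u$ strongly in $L^2(I)$. I will build these from Lusin's theorem and uniform continuity, or from truncated averages on dyadic partitions. If $u\in L^\infty(I)$, I additionally take $v_n$ to be the averages of $u$ on the intervals of a partition $\mathbb{P}_n$ whose mesh tends to zero. Averages converge in $L^2$ by Lebesgue differentiation together with dominated convergence, and they automatically satisfy $\norma{v_n}_{L^\infty}\le\norma{u}_{L^\infty}$.

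\textbf{Step 2 (correction).} Define $I_n$ as the union of the intervals of the partition underlying $v_n$ on which $|v_n|\ge c$, and set $u_n\coloneqq v_n\chi_{I_n}$. Then $u_n\in\mathcal{P}(I)$, $|u_n|\ge c$ on $I_n$, $u_n=0$ on $I\setminus I_n$, and $|u_n|\le|v_n|$, so the $L^\infty$ bound is preserved. It remains to show $u_n\to u$ in $L^2$. Write
\[
\norma{u_n-u}_{L^2}\le\norma{v_n-u}_{L^2}+\norma{v_n\chi_{I\setminus I_n}}_{L^2}.
\]
On $I\setminus I_n$ we have $|v_n|<c\le|u|$ a.e., hence $|v_n|\le |u|$ there. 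Moreover, on the set where $|v_n|<c\le|u|$ we also have $|u-v_n|\ge |u|-|v_n|$. Consequently $|I\setminus I_n|\le c^{-2}\norma{u-v_n}^2_{L^2}$ by Chebyshev only when $|v_n|\le c/2$; in general I instead argue through $\norma{v_n\chi_{I\setminus I_n}}_{L^2}\le\norma{u\chi_{I\setminus I_n}}_{L^2}$, so it suffices that $|I\setminus I_n|\to0$ along a subsequence.

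\textbf{Step 3 (the main obstacle).} The delicate point is precisely showing $|I\setminus I_n|\to0$. The set $\{|v_n|<c\}$ need not be small, because $|u|$ could equal $c$ exactly on a large set while $v_n$ sits slightly below it. To handle this I will replace the truncation by a sign-preserving lift. On $I\setminus I_n$ set $u_n\coloneqq c\,\sign(v_n)$, choosing the sign arbitrarily where $v_n=0$, and take $I_n=I$. Then $|u_n-u|\le|v_n-u|$ pointwise wherever $\sign(v_n)=\sign(u)$, since both $|u|$ and $c$ lie on the same side of $v_n$. Where the signs differ, $|u_n-u|\le |u|+c\le 2|u|$, and that set has measure tending to zero because there $|u-v_n|\ge |u|\ge c$ (Chebyshev). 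So the $L^2$ convergence follows from absolute continuity of the integral of $|u|^2$.

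Finally, for the $L^\infty$ bound: $c\le|u|\le\norma{u}_{L^\infty}$, so lifting up to level $c$ does not increase the sup norm, and $|v_n|\le\norma{u}_{L^\infty}$ already holds from Step 1. This yields the lemma with $I_n=I$, which trivially satisfies the stated requirements. If the intended $I_n$ must be a proper subset, the first construction with a subsequence still works after this modification.
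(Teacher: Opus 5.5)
Your final argument (Step~1 followed by the lift in Step~3) is correct and takes a different route from the paper, but one point needs fixing. In the paper $I_n$ is not yours to choose. It is the fixed interval $I_n=(1/n,1-1/n)$, introduced just before Theorem~\ref{teo:approx1}. The property $u_n=0$ on $I\setminus I_n$ is exactly what is used later to get $\lambda_n=0$ near $\partial I$, so ``take $I_n=I$'' does not prove the lemma. The repair is one line: multiply your lifted function by $\chi_{I_n}$. It stays in $\mathcal{P}(I)$ and keeps $|u_n|\ge c$ on $I_n$ and the $L^\infty$ bound. The extra error is $\|u\|_{L^2(I\setminus I_n)}$, which tends to $0$ since $|I\setminus I_n|=2/n$.

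You can also discard Step~2, since, as you noticed, $\{|v_n|<c\}$ need not be small. Step~3 can be shortened. The lifted value $w_n$ is a nearest point of $K=\{s\in\R:|s|\ge c\}$ to $v_n$, and $u\in K$ a.e. Hence $|w_n-u|\le|w_n-v_n|+|v_n-u|\le 2|v_n-u|$ pointwise, with no sign cases and no Chebyshev estimate.

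As for the comparison, the paper first fixes the sign structure of $u$:
\begin{itemize}
\item it approximates $S^+=\{u\ge 0\}$ in measure by a finite union of intervals;
\item it keeps $u$ where this guess is right and puts $\pm c$ on the small mismatch set;
\item after a diagonal argument it reduces to a $u$ of constant sign on finitely many intervals, each then discretized by point values (its Step~1, written for continuous $u$).
\end{itemize}
You discretize first and then project onto $\{|s|\ge c\}$. The sign pattern is inherited from $v_n$, and the only cost is where $v_n$ has the wrong sign.

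Your route is shorter and needs nothing beyond density of step functions. It uses no outer approximation of $S^+$, no diagonal reduction and no continuity. By contrast, the paper applies its continuous Step~1 to restrictions $u|_{I^j}$ that need not be continuous, which tacitly requires a further approximation, e.g.\ by $\max(w,c)$ with $w$ continuous. The paper's route keeps $u_n=u$ on most of $I$ before the final discretization, but nothing later in the paper relies on that.
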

\begin{proof}
    Assume first that $u\ge c$ (resp. $u\le -c$) a.e. in $I$. If $u\in C(\bar{I})$, the thesis follows by taking $u_n=0$ on $I\setminus I_n$ and $u_n(x) \coloneqq u(k/n)$ on $\bigl(k/n, (k+1)/n\bigl)$ for $k = 1, \dots, n-2$.

    \textbf{Step two.} In the general case, let
    \[S^+\coloneqq\{x\in I:u(x)\ge 0\},\quad S^- \coloneqq \{x\in I:u(x)<0\}.\]
    Since $S^+$ is a measurable set, for every $n$ there is a (countable) family of disjoint open intervals $\mathcal{F}^n = \{A^n_k:k\in\N\}$ such that 
    \[S^+\subseteq\bigcup_{k\in\N}A^n_k\quad\text{and}\quad \Bigl|\bigcup_{k\in\N}A^n_k\setminus S^+\Bigl|\le 1/n.\]
    We can find an index $k_n$ such that
    \[\Bigl|\bigcup_{k\le k_n}A^n_k\setminus S^+\Bigl|\le 1/n\quad\text{and }\quad\Bigl|S^+\setminus \bigcup_{k\le k_n}A^n_k\Bigl|\le 1/(2n).\]
    Set $S_n^+ = \bigcup_{k\le k_n}A^n_k$ and $S_n^- = I\setminus S_n^+$; we observe that $S_n^-$ is itself a finite union of disjoint intervals.\\
    We define
    \[u_n(x) = \begin{cases}
        u(x)&\text{ if }x\in S^+\cap S^+_n\cap I_n\text{ or }x\in S^-\cap S^-_n\cap I_n,\\
        c&\text{ if }x\in S^+_n\cap S^-\cap I_n,\\
        -c&\text{ if }x\in S^-_n\cap S^+\cap I_n,\\
        0&\text{ otherwise}.
    \end{cases}\]
    We have that $u_n\to u$ strongly in $L^2(I);$ indeed, setting $C_n:=\bigl(S_n^+\cap S^-\cap I_n\bigl)\cup\bigl(S_n^-\cap S^+\cap I_n\bigl)$ we have
    \[|C_n| \le \bigl|S^+_n\cap S^-\bigl| + \bigl|S^-_n\cap S^+\bigl| = \bigl|S^+_n\setminus S^+\bigl| + \bigl|S^+\setminus S^+_n\bigl| \le \frac{1}{n}+\frac{1}{2n},\]
    hence, using also that $|I\setminus I_n| = 2/n$, it follows
    \[\int_I|u(x)-u_n(x)|^2\dx\le\int_{I\setminus I_n}|u(x)|^2\dx+\int_{C_n}|c-|u(x)||^2\dx\to 0,\;\text{ as }n\to+\infty.\]
    By definition we have that $u_n = 0$ on $I\setminus I_n$, $u_n(x)\ge c$ for a.e. $x\in S^+_n\cap I_n$, and $u_n(x)\le -c$ for a.e. $x\in S^-_n\cap I_n.$    
    Therefore, by a diagonal argument, we can assume that $u=0$ on $I\setminus I_n$ for some $n\in\N$, and that there is a finite number of disjoint intervals $(I^j)_{j\le N}$ such that $\displaystyle\Bigl|I_n\setminus\Bigl(\bigcup_{j\le N} I^j \Bigl)\Bigl| = 0$, and for every $j\le N$, either $u(x)\ge c$ or $u(x)\le-c$ a.e. on $I^j$. We can now conclude by applying Step 1 to $u_{|_{I^j}}$ for each $j = 1, \dots, N$. 
    
    It is immediate to see that, by construction, if $u\in L^\infty(I)$ we can choose $(u_n)_n\subset L^\infty(I)$ such that $\norma{u_n}_{L^\infty}\le\norma{u}_{L^\infty}$.
\end{proof}

\subsection{A first refinement result}\label{subsec:first_refinement_result}
In the following we will identify each element $M\in\rsym$ with the vector $m = (M_{11}, M_{22}, 2M_{12})\in\R^3$ and viceversa. The vector associated with $M^\B$ will be denoted by $m^\B$. Moreover, with an abuse of notation, we identify $\K$ with the only $3\times 3$ symmetric matrix such that $Q(M) = \innerproduct{\K\cdot m}{m}$ for all $M\in\R^{2\times 2}_{sym}.$

\paragraph{Determinant function.} Since $\mathcal{B}$ is an orthonormal basis, we have that
\[m_1m_2-\frac{1}{4}m_3^2 = \det(M) = \det(M^\B) = m^\B_1m^\B_2-\frac{1}{4}(m^\B_3)^2.\]
In particular, setting 
\begin{equation*}
    \D \coloneqq \begin{pmatrix}
0 & 1/2 & 0\\
1/2 &0 & 0\\
0&0&-1/4
\end{pmatrix},
\end{equation*}
we have $\det(M) = \innerproduct{\D \cdot m}{m} = \innerproduct{\D \cdot m^\B}{m^\B}$.
Given the vector $m\in\R^3$ associated to a matrix $M$, we will sometimes write, with an abuse of notation, $\det(m)$ to refer to $\det(M)$.
We will call $\DD^+, \DD^-, \DD^0$ the regions of $\R^3$ where the determinant function is, respectively, strictly positive, strictly negative, or equal to zero.

\paragraph{Energy in the new basis.}
Identifying the basis $\B$ with a map $\mathcal{B}\in C^1(\bar{I}, \R^{2\times 2})$ defined as $\mathcal{B}(t)\coloneqq \bigl(B'(t)|N(t)\bigl)$, we observe that, for every $t\in\bar{I}$, the function 
\[\mathcal{L}_t: \rsym\to \rsym,\quad 
\mathcal{L}_t(C) \coloneqq \mathcal{B}^{-T}(t)C\B^{-1}\]
satisfies
$\mathcal{L}_t(M^{\B(t)}) = M$,
and is a linear isomorphism. Therefore, for every $t\in\bar{I}$ the map 
\[Q_t:\rsym\to\R,\quad Q_t(C) \coloneqq Q\bigl(\mathcal{L}_t(C)\bigl)\]
which satisfies $Q_t(M^{\B(t)}) = Q(M),$ is a positive definite quadratic form. We call $\K_t\in\R^{3\times 3}$ the positive definite symmetric matrix such that $Q_t(C) = \K_tC\cdot C$. Similarly, we define $Q_t^{**}:\rsym\to\rsym$ as
\begin{equation}\label{eq:energy_density_new_coordinates}
    Q_t^{**}(C)\coloneqq Q_t(C)+\alpha^+\det\bigl(\mathcal{L}_t(C)\bigl)^++\alpha^-\det\bigl(\mathcal{L}_t(C)\bigl)^- = Q_t(C)+\alpha^+\det\bigl(C\bigl)^++\alpha^-\det\bigl(C\bigl)^-.
\end{equation}
We observe that \[\F(M) = \int_IQ_t^{**}(M^\B(x))\det(D(x))\dx.\]

\paragraph{Eigenvalues and eigenvectors}\label{par:eigenspace_eigenvectors}
By the definition of $\alpha^\pm$ in \eqref{alpha_p} and \eqref{alpha_m}, 0 is an eigenvalue of the quadratic forms $\K_t-\alpha^-\D$ and $\K_t+\alpha^+\D$. We denote the corresponding eigenspaces by $V_t^-,\, V_t^+$ respectively. Note that, if $m\in V^-_t$ (resp. $m\in V^+_t$), then $\det(m)>0$ (resp. $\det(m)<0$). Indeed, considering for instance the case $m\in V^-_t$, it holds
\[\innerproduct{\bigl(\K_t -\alpha^-\D\bigl)\cdot m}{m} = 0\iff \innerproduct{\K_t \cdot m}{m} = \alpha^-\innerproduct{\D\cdot m}{m}\implies \innerproduct{\D\cdot m}{m}>0\]
since $\K_t$ is positive definite.

We observe that $\bigl(\K_t\pm\alpha^\pm\D\bigl)(C) = 0\iff \bigl(\K\pm\alpha^\pm\D\bigl)(\mathcal{L}_t(C)) = 0\iff \mathcal{L}_t(C)$ is an eigenvector of $\K\pm\alpha^\pm\D$ with null eigenvalue. In particular, the dimension of $V^-_t$ and $V^+_t$ is constant with respect to $t$. Therefore denoting by $V^\pm$ the eigenspace of the zero eigenvalue of $\K\pm\alpha^\pm\D$, if $V^{\pm} = \text{span}\{v^\pm_i:i=1, \dots, \dim(V^\pm)\}$, then for every $t$ it holds 
$V^\pm_t = \text{span}\{\mathcal{L}_t^{-1}\bigl(v^\pm_i\bigl):i=1, \dots, \dim(V^\pm)\}$. In particular, the vectors $w^\pm_i(t)\coloneqq\mathcal{L}_t^{-1}(v^\pm_i),\,i = 1, \dots, \dim(V^\pm)$ are a basis of $V^\pm_t$ for every $t$, and they belong to $C^1(\bar{I};\R^3)$.

\begin{remark}\label{rem:eigenvalues}
    Let $\alpha$ be either $\alpha^+$ or $\alpha^-,$ and let $t\in I$. Then $(\K_t\pm\alpha\D)_{22} = (\K_t)_{22} = \innerproduct{\K_t\cdot e_2}{e_2}>0$
    since $\K_t$ is positive definite for every $t$. We conclude that $\text{dim}(V^\pm_t)\le 2$, and that the vector $(0, 1, 0) = e_2$ never belongs to $V^\pm_t$.
\end{remark}

\begin{lemma}\label{lemma:due}
        Let $V_t$ denote either $V_t^+$ or $V_t^-$, and let $\DD$ denote the region $\DD^-$ or $\DD^+$, correspondingly. For every $t\in I$ let $v: I\to\R^3$ be such that the map $t\mapsto v(t)$ is continuous and $v(t)\neq 0$ for every $t\in J$, and let $\pi_{v(t)} = \text{span}\{v(t), e_2\}\subseteq\R^3$. Consider a vector $m\in\R^3$, and let $J\subseteq I$ be an open interval such that $m\in \DD\setminus \pi_{v(t)}$ for every $t\in J$. Then there exist two continuous functions $m^1,\,m^2:\: J\to\R^3$ such that \begin{itemize}
            \item[(1)] $\det(m^i(t)) = 0$ for every $t\in J,$ for $i=1, 2$;
            \item[(2)] the vector $m$ is a convex combination of $m^1(t)$ and $m^2(t)$ for every $t\in J$;
            \item[(3)] $\qstart(m^i(t)) = \qstart(m)$ for every $t\in J$ and $i=1, 2$;
            \item[(4)] for every $t\in J$ the first component of $m^i(t)$ is never zero for $i=1, 2$;
            \item[(5)] there exists a constant $c_\infty,$ depending only on $m$ and $v$ such that $||m^i||_{L^\infty(J)}\le c_\infty$ for $i=1, 2.$
        \end{itemize}
\end{lemma}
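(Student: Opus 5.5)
The plan is to construct $m^1(t)$ and $m^2(t)$ on the line through $m$ with direction $v(t)$, i.e. to set $m^i(t)=m+s_i(t)v(t)$, as anticipated in the discussion after Remark~\ref{rem:uniform_convergence}. Here $v(t)$ is taken in $V_t$, the eigenspace on which $\K_t\pm\alpha^\pm\D$ vanishes.

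\textbf{Energy is constant along the line.} Write $\alpha$ for the relevant constant $\alpha^\pm$, with the sign chosen so that $\qstart=Q_t\pm\alpha\det$ on the closure of $\DD$. Since $v(t)$ lies in the kernel of the symmetric matrix $\K_t\pm\alpha\D$, for every $s$ we get
\[
\innerproduct{(\K_t\pm\alpha\D)(m+sv)}{m+sv}=\innerproduct{(\K_t\pm\alpha\D)m}{m}.
\]
So $Q_t\pm\alpha\det$ is constant along the line $s\mapsto m+sv(t)$. As long as the line stays in $\overline{\DD}$, this quantity equals $\qstart$. This will give (3) once the endpoints are placed on $\DD^0$.

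\textbf{Finding the endpoints.} Next I look at $f_t(s)\coloneqq\det(m+sv(t))=\det m+2s\innerproduct{\D m}{v(t)}+s^2\det v(t)$. Suppose $V_t=V_t^-$, so $\DD=\DD^+$. Then $\det v(t)<0$, by the observation before Remark~\ref{rem:eigenvalues} with the signs arranged consistently. In the other case the signs are reversed. So $f_t$ is a quadratic with leading coefficient of the sign opposite to $\det m$. It therefore has two real roots $s_1(t)<0<s_2(t)$, and these depend continuously on $t$ because $v$ is continuous and $\det v(t)\neq0$. Between the roots $f_t$ has the sign of $\det m$, so the segment $[m^1(t),m^2(t)]$ lies in $\overline{\DD}$. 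This gives (1), (2) and (3). For (2), $m$ is the convex combination of $m^1,m^2$ with weights $s_2/(s_2-s_1)$ and $-s_1/(s_2-s_1)$.

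\textbf{Bounds and nonvanishing first component.} For (5) I use $|s_i|\le C(|\D m||v|+\sqrt{|\det m||\det v|})/|\det v|$. This needs bounds on $|v|$ and on $|\det v|$ away from zero over $J$. I expect to get them by normalising $v(t)$ (say $|v|=1$, since $V_t$ is a continuous $\mathcal{L}_t^{-1}$-image of a fixed space and $\det$ is bounded away from $0$ on the unit sphere of $V_t$ by compactness), or else by shrinking $J$ to a compact subinterval. The constant then depends only on $m$ and $v$.

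For (4), suppose the first component of $m^i(t)$ vanishes. From $\det m^i=m^i_1m^i_2-\tfrac14(m^i_3)^2=0$ we get $m^i_3=0$, so $m^i(t)\in\spn\{e_2\}$. Then $m=m^i-s_iv\in\pi_{v(t)}$, which contradicts the hypothesis $m\notin\pi_{v(t)}$. This is exactly why the plane $\pi_{v(t)}$ appears in the statement.

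\textbf{Main obstacle.} The delicate point is making (5) uniform. If $J$ is open and $v(t)$ degenerates near $\partial J$ (for instance $|v|\to0$), the roots could blow up. I would first handle this by replacing $v$ with $v/|v|$, which changes neither the line nor $\pi_{v(t)}$. I would then invoke the fact that the unit sphere of $V_t$ is a $C^1$ family of compact sets on which $\det$ is strictly signed.
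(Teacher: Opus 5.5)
Your proposal is correct and follows the paper's proof essentially step by step. You take the line $s\mapsto m+sv(t)$ along the kernel direction and the two opposite-sign roots of the quadratic $\det(m+sv(t))$. You get constancy of $\qstart$ on the segment between the roots because the segment stays in $\overline{\DD}$, where $\qstart$ coincides with the degenerate form. For (4) you use the same $\pi_{v(t)}$ contradiction. For (5), your normalisation $v\mapsto v/|v|$ leaves $m^i(t)$ and $\pi_{v(t)}$ unchanged; combined with $|\det|$ being bounded below on the unit sphere of $V_t$, it justifies the uniform bound more cleanly than the paper's sequential argument.

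The one thing to fix is the labelling. For $m\in\DD^+$ you must take $v(t)\in V^+_t$, the kernel of $\K_t+\alpha^+\D$. This is what gives $\det v(t)<0$ and what the paper's proof actually uses, not $V^-_t$ as the statement's ``correspondingly'' suggests.
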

\begin{proof}    
    We can suppose that $m\in \DD^+$ the other case being analogous.
    For every $t\in J$ consider the function $\varphi_{v(t)}^m$ defined by 
    \[\varphi_{v(t)}^m:\R\to\R^3,\quad \varphi_{v(t)}^m(s) = m+sv(t),\]
    which is a parametrization of the line passing through $m$ with direction $v(t)$. Denoting by $m_i$ and $v_i(t)$ the components of $m$ and $v(t)$ respectively, a short computation shows that 
    \begin{equation*}
        \det(\varphi_{v(t)}^m(s)) = \det(m) + (m_2v_1(t)+m_1v_2(t)-\frac{1}{2}m_3v_3(t))\cdot s+\det(v(t))\cdot s^2.
    \end{equation*}
    Since $\det(v(t))<0$, it follows that 
    \begin{equation}\label{eq:det_v}
        \det(\varphi_{v(t)}^m(0)) = \det(m)>0,\quad\lim_{s\to+\infty}\det(\varphi_{v(t)}^m(s)) = -\infty,\quad\lim_{s\to-\infty}\det(\varphi_{v(t)}^m(s)) = -\infty.
    \end{equation}
    We define $s_1, s_2\in\R$ as
    \begin{equation}\label{eq:s_i}
        s_1(t)\coloneqq \inf\{s>0:\:\det(\varphi_{v(t)}^m(s)) = 0\},\quad s_2(t)\coloneqq \sup\{s<0:\:\det(\varphi_{v(t)}^m(s)) = 0\}
    \end{equation}
    and $m^1(t)\coloneqq m+s_1(t)v(t),\, m^2(t)\coloneqq m+s_2(t)v(t).$ It follows immediately that $\det(m^i(t)) = 0$ and that $m$ is a convex combination of $m_1(t)$ and $m_2(t)$ for every $t\in J$.
    Moreover, for every $t\in J$ and $s_2\le s\le s_1$ we have
    \begin{align*}
        &\qstart(m+sv(t)) = Q_t(m+sv(t))+\alpha^-\det(m+sv(t))^-+\alpha^+\det(m+sv(t))^+ = \\        &=\innerproduct{\bigl(\K_t+\alpha^+\D\bigl)\cdot (m+sv(t))}{m+sv(t)} = \qstart(m)+2\innerproduct{\bigl(\K_t+\alpha^+\D\bigl)\cdot v(t)}{m}\cdot s+\innerproduct{\bigl(\K_t+\alpha^+\D\bigl)\cdot v(t)}{v(t)}\cdot s^2,
    \end{align*}
    which reduces to $\qstart(m)$ since $\bigl(\K_t+\alpha^+\D\bigl)\cdot v(t)=0$.
    
    We now prove (4). By contradiction we suppose that for some $t\in J$, without loss of generality, $m^1(t) = (0, y_1, z_1)$ (i.e. the first component of $m^1(t)$ is zero). Thus, $0 = \det(m^1(t)) = -\frac{1}{4}z_1^2$
    implies that $m^1(t) = (0, y_1, 0) = y_1e_2.$ In particular, $m = y_1e_2-s_1(t)v(t)\in\text{span}\{e_2, v(t)\},$ which gives a contradiction.\\
    From (\ref{eq:det_v}) and (\ref{eq:s_i}), we deduce that $s_i(t_n)$ is bounded, therefore property (5) follows from the continuity of $v$.
    Finally we prove that $t\mapsto m^1(t)$ is continuous, the continuity of $m^2(t)$ being analogous. Let $t_n\to\bar{t}\in J.$ From (5), up to subsequences, we may suppose $s_1(t_n)\to\bar{s}_1$. Thus $m^1(t_n) = m+s_1(t_n)v(t_n)\to m+\bar{s}_1v(\bar{t})\eqqcolon\bar{m}^1(\bar{t})$ and $\det(\bar{m}^1(\bar{t})) = 0$. If $\bar{s}_1\neq s_1(\bar{t}),$ this is a contradiction since $\bar{s}_1\ge 0$ and, by \eqref{eq:s_i}, the function $s\mapsto\det(\varphi_{v(\bar{t})}^m(s)\bigl)$ for $s\ge 0$ is a polynomial of degree two vanishing only at $s_1(\bar{t}).$
\end{proof}

The following theorem is a refinement of the relaxation result recalled in \eqref{rem:relaxation} for matrix fields $M\in \mathcal{P}^{\B}(I;\rsym)$ (see the discussion after Remark \ref{rem:uniform_convergence} for more details). For sequences of matrices $(M^n)_n\subseteq L^2(I;\rsym)$ we denote their components in the basis $\B$ by $M_{ij}^{\B, n}$. We also introduce the notation $I_n \coloneqq (1/n, 1-1/n)$.

\begin{theorem}\label{teo:approx1}
        Let $v^\pm(t)\in C^1(I;\R^3)$ be such that $v^\pm(t)\in V^\pm_t$ and $v(t)\neq 0$ for every $t\in I$, and define \[\pi_{v^\pm(t)} = \text{span}\{e_2, v^\pm(t)\} =: \{(x, y, z)\in\R^3:\:\alpha^\pm(t) x+\beta^\pm(t) z = 0\}.\]
        Let $M\in \mathcal{P}^\B(I;\rsym)$ be such that $A_M\in\mathcal{P}(I;\tilde{\U}_c\cap\tilde{\U}^{(\alpha^+, \beta^+)}_c\cap\tilde{\U}^{(\alpha^-, \beta^-)}_c)$ for some positive constant $c>0$. 
        
        Then there exist functions 
        $\lambda_n\in L^\infty(I)$ and $p_n\in L^\infty(I;\mathbb{S}^1)$ with $\lambda_n = 0$ on $I\setminus I_n$ such that 
        setting $M^n \coloneqq \lambda_n p_n\otimes p_n,$ we have:
        \begin{itemize}
            \item[(1)] $M^n\weakstar M$ weakly$^*$in $L^\infty(I;\rsym)$ and $\F(M^{n})\to\F(M);$
            \item[(2)] there exists a constant $\tilde{c}>0$, independent of $n$, such that $|M^{\B, n}_{11}(x)|\ge\tilde{c}$ for a.e. $x\in I_n$ and for all $n$.
        \end{itemize}
\end{theorem}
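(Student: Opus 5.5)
Since $M^\B$ and $A_M$ are piecewise constant, I will work on each interval $J$ of the partition separately and then glue the pieces. On such a $J$ the vector $m^\B\equiv m$ is constant. By assumption $|m_1|=|A_{13}|\ge c$ and $|\alpha^\pm(t)m_1+\beta^\pm(t)m_3|=|\alpha^\pm A_{13}+2\beta^\pm A_{23}|\ge c$ on $J$. This means $m$ stays at a uniform positive distance from both moving planes $\pi_{v^\pm(t)}$.

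\textbf{Case analysis.} I split according to the sign of $\det m$.
\begin{itemize}
\item If $\det m=0$, I take $M^n=M$ on $J\cap I_n$. The first component is $m_1$, so $|m_1|\ge c$ and (2) holds. Writing $M=\lambda p\otimes p$ is immediate.
\item If $\det m>0$ (the case $\det m<0$ is symmetric, with $v^+$ replaced by $v^-$), I use the vector $v^-(t)\in V^-_t$. Then $\det v^-<0$, which matches the proof of Lemma \ref{lemma:due}. Lemma \ref{lemma:due} then gives continuous $m^1(t),m^2(t)$ on $J$ with the following properties: zero determinant; $m=\theta(t)m^1(t)+(1-\theta(t))m^2(t)$ with continuous $\theta(t)\in(0,1)$; $\qstart(m^i(t))=\qstart(m)$; $|m^i_1|$ nonzero; and a uniform $L^\infty$ bound.
\end{itemize}
For (2) I need $|m^i_1|$ bounded below uniformly, not just nonzero. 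I get this by compactness: I shrink $J$ slightly, to $J\cap I_n$ or a compact subinterval, and use continuity. Alternatively I redo the contradiction argument of Lemma \ref{lemma:due}(4) quantitatively. If $m^i_1(t)$ is small, then $m^i_3$ is small because $\det=0$. Hence $m$ is close to $\mathrm{span}\{e_2,v(t)\}$, which contradicts the distance bound $c$. The $C^1$ regularity of $v^\pm$ on $\bar I$ (it is $\mathcal L_t^{-1}$ of fixed vectors) makes this uniform.

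\textbf{Oscillation and convergence.} Partition $J\cap I_n$ into subintervals of length $1/n$. On each subinterval I set $M^n$ equal to $m^1(t)$ on a left fraction and to $m^2(t)$ on the rest, with the fractions chosen so that $\theta$ is preserved. Because $\theta$ and $m^i$ are uniformly continuous, the resulting piecewise functions converge weakly$^*$ in $L^\infty$ to $\theta m^1+(1-\theta)m^2=m$. The argument is standard: test against characteristic functions of intervals and use the uniform $L^\infty$ bound. I define $M^n=0$ on $I\setminus I_n$, and I take $\lambda_n,p_n$ from the rank-one representation of the zero-determinant matrices $\mathcal L_t(m^i(t))$. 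For the energy, $Q_t^{**}(M^{\B,n}(t))=\qstart(m)$ exactly on $J\cap I_n$. Thus $\F(M^n)=\int_{I_n}\qstart(m^\B)\det D\to\F(M)$, because the integrand is bounded and $|I\setminus I_n|\to0$.

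\textbf{Main obstacle.} The delicate step is the uniform lower bound on $|M^{\B,n}_{11}|$ in (2). It requires the quantitative version of Lemma \ref{lemma:due}(4) just described, together with the continuity of $m^i(t)$ up to the endpoints of each partition interval. If necessary, I will handle endpoint continuity by extending $v^\pm$ continuously to $\bar I$, which is possible since $v^\pm$ is given by $\mathcal L_t^{-1}$ of fixed eigenvectors. Finally, $\tilde c$ is the minimum of these lower bounds over the finitely many intervals and $c$.
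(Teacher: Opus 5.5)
Your proposal is correct and follows essentially the paper's proof. On each constancy interval you use Lemma~\ref{lemma:due} to split $m$ into two zero-determinant points on a line along which $\qstart$ is constant, oscillate between them with exact energy preservation, and factor the rank-one values. A few details differ and are improvements: your one-step oscillation with frozen local weights replaces the paper's freeze-then-diagonalize step, and setting $M^n=0$ off $I_n$ directly avoids Lemma~\ref{lemma:approx2}. Your quantitative version of Lemma~\ref{lemma:due}(4), based on the uniform distance of $m$ from $\pi_{v^\pm(t)}$, makes rigorous the uniform bound in (2), which the paper only obtains by a continuity remark on an open interval.

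One label must be fixed. For $\det m>0$ the correct direction is $v^+(t)\in V^+_t=\ker(\K_t+\alpha^+\D)$: this is the eigenspace with $\det v^+<0$, and along it $\qstart$ (equal to $Q_t+\alpha^+\det$ on $\{\det\ge 0\}$) is constant. Vectors in $V^-_t$ have positive determinant, so your stated choice of $v^-$ for $\det m>0$ is inverted. Since the hypothesis on $A_M$ controls both planes, swapping the superscripts changes nothing else in your argument.
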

\begin{proof}
    \textbf{Step one.} We first prove that there exists a sequence $(M^n)_n$ satisfying (1), (2), and such that $\det(M^n) = 0$ on $I$. 
    
    We show how to construct the sequence $M^n$ on every interval $J$ on which $M^\B$ is constant. So, let $J = (a, b)$ be an interval with this property, and let $M^\B\equiv\bar{M}^\B\in\rsym$ on $J$. If $\det(\bar{M}^\B)=0,$ we define $M^{\B, n}\coloneqq\bar{M}^\B$ on $J$ for every $n$. Otherwise, we identify $\bar{M}^\B$ with a vector $\bar{m}^\B\in\R^3$. By Lemma \ref{lemma:due}, for every $t\in J$ there are $\lambda(t)\in(0, 1)$ and $m_1^\B(t), m^\B_2(t)\in\R^3$ such that the $m_i^\B$ are continuous functions on $J$, $\qstart(m^\B_i(t)) = \qstart(\bar{m}^\B),\;\det(m^\B_i(t)) = 0$ for $i=1, 2$, and $\bar{m}^\B = \lambda(t) m^\B_1(t)+(1-\lambda(t)) m^\B_2(t).$ In particular, $\lambda(t) = \norma{\bar{m}-m_2^\B(t)}/\norma{m_1^\B(t)-m_2^\B(t)}$ is a continuous function on $J$. Moreover, from Lemma \ref{lemma:due}-(5) there exists a constant $c_\infty$ depending only on $|m^\B|$ and $||v^\pm||_{L^\infty(J)}$ such that $\norma{m_i^\B}_{L^\infty(J)}\le c_\infty$. Since the first component of $m_i^\B$ is never zero, by continuity we deduce that $|(m_i^\B)_{1}|>\tilde{c}$ for some constant $\tilde{c}$.\\
    For $n\ge 2$ and $k = 0, \dots, n$ we set $t^n_k\coloneqq a+k/n(b-a)$ and $J^n_k\coloneqq[t^n_k, t^n_{k+1})$. The functions 
    \[m^{\B, n}(t)\coloneqq \sum_{k=0}^{n-1}\chi_{J^n_k}(t)\Bigl(\lambda(t^n_k)m^\B_1(t)+\bigl(1-\lambda(t^n_k)\bigl)m^\B_2(t)\Bigl)\]
    converge to $\bar{m}^\B$ uniformly on $J$, and therefore \[\int_{J}\qstart (m^{\B,n}(t))\det(D(t))\dt\to \int_{J}\qstart (\bar{m}^{\B})\det(D(t))\dt.\]
    Since $A_M(t)\in\tilde{\U}_c^{(\alpha^+(t), \beta^+(t))}\cap \tilde{\U}_c^{(\alpha^-(t), \beta^-(t))}$ for every $t\in J$, the uniform convergence of $M^{\B,n}$ to $\bar{M}^\B$ ensures that, for $n$ sufficiently large, \[A_{M^{\B, n}}(t)\in\tilde{\U}_{c/2}^{(\alpha^+(t), \beta^+(t))}\cap \tilde{\U}_{c/2}^{(\alpha^-(t), \beta^-(t))} \text{ for a.e. }t\in J.\]
    Since $\norma{m^{\B, n}}_{L^\infty(J)}\le 2c_\infty$,up to a diagonal argument, we can assume that the function $t\mapsto \lambda(t)$ is constant on the interval $J$.
    
    So, now let us suppose that $m^\B(t) = \lambda m_1^\B(t)+(1-\lambda)m^\B_2(t)$ for $t\in J = (a, b)$, where $\lambda\in (0, 1)$ is a constant, the functions $m^\B_i$ are continuous on $J$ with $\norma{m^\B_i}\le c_\infty$ for $i = 1, 2$ and $\qstart(m^\B(t)) = \qstart(m_i^\B(t))$ for every $t\in J$. Define $J_\lambda\coloneqq \bigl(a, a+\lambda(b-a)\bigl)$ and denote by $t\mapsto\chi_\lambda(t)$ the $(b-a)-$periodic extension to $\R$ of the characteristic function of $J_\lambda$. Finally for every $n$ we define
    \[\tilde{m}^{\B, n}(t)\coloneqq \chi_\lambda(nt)m^\B_1(t)+\bigl(1-\chi_\lambda(nt)\bigl)m^\B_2(t).\]
    By the Riemann-Lebesgue Lemma it holds $\tilde{m}^{\B, n}\weakstar \lambda m^\B_1+(1-\lambda)m^\B_2 = m^\B$ weakly$^*$ in $L^\infty(J)$. Moreover, we have $\det(\tilde{m}^{\B, n}(t)) = 0,\,$ $|\tilde{m}^{\B, n}(t)|\le c_\infty$ for a.e. $t\in J,$ and \[\int_J\qstart(\tilde{m}^{\B, n}(t))\det(D(t))\dt  = \int_{J}\qstart(m^\B(t))\det(D(t))\dt,\]
    where we used that $\qstart(m_i^\B(t)) = \qstart(m^\B(t))$.\\
    \textbf{Step two.} We show that there exist $\lambda_n$ and $p_n$ satisfying all the required properties.
    Let $(M^n)_n$ be the sequence constructed in Step 1.\\
    For every $n$ there are functions $u^n\in L^4(I;\R^2)$ and $\theta^n\in L^\infty(I;\{\pm1\})$ such that 
    $M^n = \theta^nu^n\otimes u^n$ a.e. in $I$. Since $|\theta^n| = 1$ a.e. on $I$, by Lemma \ref{lemma:approx2} we can find a sequence $\theta^n_k\in\mathcal{P}(I)$ with 
    $\theta^n_k = 0$ on $I\setminus I_k$ and $|\theta^n_k|\ge 1$ on $I_k$ such that $\theta^n_k\xrightarrow{k\to\infty}\theta^n\;\text{ strongly in }\;L^2(I)$.
    By a diagonal argument we can thus assume that $M^n = \theta^nu^n\otimes u^n$, with $\theta^n\in\mathcal{P}(I),\,|\theta^n|\ge 1$ on $I_n$, and $\theta^n=0$ on $I\setminus I_n$.
    We set 
    \(\lambda_n = \theta^n|u^n|^2\) and $p_n = u^n/|u^n|$,
    which is well defined since $|u^n|\ge|M^{\B, n}_{11}|\ge c$ on $I_n$.
\end{proof}

\subsection{Conclusion}\label{subsec:conclusion}
In this subsection we prove Theorem \ref{teo:perturbation}.

\begin{proof}[Proof of Theorem \ref{teo:perturbation}] 
Let $v^\pm(t)\in C^1(I;\R^3)$ be such that $v^\pm(t)\in V^\pm_t$ for every $t\in I$, and define \[\pi_{v^\pm(t)} = \text{span}\{e_2, v^\pm(t)\} =: \{(x, y, z)\in\R^3:\:\alpha^\pm(t) x+\beta^\pm(t) z = 0\}.\]
\textbf{First step.} We prove the existence of a sequence $(M^n)_n = (\lambda_n p_n\otimes p_n)_n$ with all the required properties, except the regularity.

In view of Lemma \ref{lemma:approx5} we may suppose that $M\in\mathcal{P}^\B(I;\rsym)$ and that $A_M\in\mathcal{P}(I;\tilde{\U}_c\cap\tilde{\U}^{(\alpha^+, \beta^+)}\cap\tilde{\U}^{(\alpha^-, \beta^-)})$ for some constant $c>0$. By Theorem \ref{teo:approx1} there exist $(\tilde{\lambda}_n)_n\subset L^\infty(I)$ and $(\tilde{p}_n)_n\subset L^\infty(I;\mathbb{S}^1)$ such that, setting $\tilde{M}^n:=\tilde{\lambda}_n\tilde{p}_n\otimes\tilde{p}_n$, it holds $\tilde{\lambda}_n=0$ on $I\setminus I_n,\,|\tilde{M}_{11}^{\B, n}|\ge c$ a.e. on $I_n,\,\tilde{M}^n\weakstar M$ in $L^\infty(I),$ and $\F(\tilde{M}^n)\to\F(M)$. We set $\tilde{A}^n \coloneqq A_{\tilde{M}^n}$ and $A \coloneqq A_M$. Note that 
\begin{equation}\label{eq:M}
	\tilde{M}^{\B, n}_{22} = \begin{cases}
		0&\text{ if }\tilde{A}^n_{13}=0,\\
		\frac{(\tilde{A}^n_{23})^2}{\tilde{A}^n_{13}}&\text{ if }\tilde{A}^n_{13}\neq 0.
	\end{cases}
\end{equation}

We now perturb the sequence $\tilde{A}^n$ in order to make it admissible. To do so, we follow the same strategy as in \cite[Lemma 3.7]{mora1}. Since $A = A_M$ we have that $\tilde{A}^n\weakstar A$ in $L^\infty(I;\tilde{\U}).$ Let $J\subseteq (1/4, 3/4) = I_4$ be an open interval on which $M$ is constant. We define 
\[\tilde{E}\coloneqq\{C\in \mathcal{P}(I;\U):\:C(x) = 0\text{ for a.e. }x\in I\setminus J\}.\]
    Since $A$ is nondegenerate on $J$, by Lemma \ref{lemma:perturbation_hornung} there is a finite dimensional subspace $E\subseteq\tilde{E}$ and $(\hat{A}^n)_n\in E$ converging to zero uniformly such that $A^n\coloneqq \tilde{A}^n+\hat{A}^n$ is admissible for every $n$. In particular, $A^n$ converges weakly$^*$ to $A$ in $L^\infty(I;\tilde{\U}).$ 
    
    Define 
    \[\gamma_n = \begin{cases}
    	(A^n_{23})/A^n_{13}&\text{ if }A^n_{13}\neq 0,\\
    	0&\text{ if }A^n_{13}=0.
    \end{cases}\]
    By construction $A^n = \tilde{A}^n$ in $I\setminus J$. Therefore equation \eqref{eq:M} implies that $\gamma_n = \tilde{M}_{22}^{\B, n}$ in $I\setminus I_4$. On the other hand, 
    \[|\tilde{A}^n_{13}| = |\tilde{M}_{11}^{\B, n}\ge c\text{ a.e. on }I_4.\]   
    
     Since $\hat{A}^n\to 0$ uniformly, we deduce that $A_{13}^n$ is also bounded away from zero on $I_4$. Therefore, using that $\tilde{A}^n$ and $A^n$ are uniformly bounded, we have
    \begin{align*}
        |\gamma_n-\tilde{M}^{\B, n}_{22}| &= \Bigl|\frac{(A^n_{23})^2}{A^n_{13}}-\frac{(\tilde{A}^n_{23})^2}{\tilde{A}^n_{13}}\Bigl| = \Bigl|\frac{(A^n_{23})^2\tilde{A}^n_{13}-(\tilde{A}^n_{23})^2A_{13}^n}{A^n_{13}\tilde{A}^n_{13}}\Bigl|\le\\
        &\le \frac{1}{c^2}\Bigl(|\tilde{A}^n_{13}||(A_{23}^n)^2-(\tilde{A}_{23}^n)^2|+|(\tilde{A}^n_{23})^2||\tilde{A}^n_{13}-A^n_{13}|\Bigl)\le\\
        &\le C|\hat{A}^n_{23}|+C|\hat{A}^n_{13}|,
    \end{align*}
    for some constants $c, C$. We conclude that 
    \begin{equation}\label{eq:25}
    	\gamma_n-\tilde{M}_{22}^{\B, n}\to 0\text{ uniformly on }I.
    \end{equation}

    We now set 
    \begin{equation}\label{eq:M_n}
        M^n\coloneqq A_{13}^nB'\otimes B'+A^n_{23}(B'\otimes N+N\otimes B')+\gamma_n N\otimes N.
    \end{equation}
    From \eqref{eq:25}, the convergence of $\F(\tilde{M}^n)$ to $\F(M)$, and the uniform convergence of $\hat{A}^n$, we deduce that $M^n\weakstar M$ and $\F(M^n)\to\F(M)$.

    It remains to show that $M^n$ is of the form $\lambda_np_n\otimes p_n$ for some $\lambda_n$ and $p_n$ satisfying the desired properties.
    We define $p_n = B'$ if $x\in I\setminus I_n$, and
    \begin{equation}\label{eq:p_n}
    p_n\coloneqq\alpha_n\bigl(A^n_{13}B'+A^n_{23}N\bigl)\;\;\text{ in } I_n,\;\text{ with }\alpha_n\coloneqq \frac{\sign(A^n_{13})}{\Bigl((A_{13}^n)^2+(A_{23}^n)^2\Bigl)^{1/2}}.
    \end{equation}
    We set $\lambda_n = 0$ on $I\setminus I_n$, and 
    \begin{equation}\label{eq:lambda_n}
        \lambda_n\coloneqq \frac{A^n_{13}}{(A^n_{13})^2(\alpha_n)^2} = \frac{(A^n_{13})^2+(A^n_{23})^2}{A^n_{13}}
    \end{equation}
    in $I_n$.
    It is easy to verify that $M^n = \lambda_np_n\otimes p_n$, and that $\lambda_n=0$ on $I\setminus I_n,\,p_n = B'$ on $I\setminus I_n$ and $p_n\cdot B'>0$ on $I$. Moreover, since $|A^n_{13}|\ge c/2$ a.e. on $I_n$ for $n$ large enough, it follows that 
    \[|\lambda_n| = \Bigl|A^n_{13}+\frac{(A^n_{23})^2}{A^n_{13}}\Bigl|\ge|A^n_{13}|\ge c/2,\text{ a.e. in }I_n\]
    for a suitable constant $c$. Moreover, since $\hat{A}^n_{13}\to 0$ uniformly, and $|\tilde{A}^n_{13}|\ge c$, on $I\setminus I_n$
    \[\sign(\lambda_n) = \sign(A^n_{13}) = \sign(\tilde{A}^n_{13}+\hat{A}^n_{13})= \sign(\tilde{A}^n_{13})\]
    for every $n$ sufficiently large. Thus, $\sign(\lambda_n)\in\mathcal{P}(I)$.\\
    \textbf{Second step.} We now prove that we can find $\lambda_n$ and $p_n$ with the desired regularity.
    We only sketch the proof since it follows the same strategy of \cite[Proposition 3.1]{mora1}.\\
    1) In view of the first step we can assume that $M = \lambda p\otimes p$, where $\lambda\in L^\infty(I)$ satisfies $\sign(\lambda)\in\mathcal{P}(I),\;\lambda(x) = 0$ for a.e. $x\in I_{\bar{n}}$ for some $\bar{n}$ and $|\lambda(x)|\ge c$ for a.e. $x\in I_{\bar{n}}$, while $p\in L^\infty(I;\mathbb{S}^1)$ satisfies $p = B'$ on $I_{\bar{n}}$ and $p\cdot B'>0$ a.e. on $I$, provided that we exhibit a sequence $(M^n)_n$ satisfying all the required properties with strong rather than weak convergence in Theorem \ref{teo:perturbation}-(iii).\\
    2) By the above mentioned properties of $p$ we can write \[p(t) = \cos\bigl(\theta(t)\bigl)B'(t)+\sin\bigl(\theta(t)\bigl)N(t),\]
    for a measurable function $\theta\in L^\infty\bigl(I;(-\pi/2, \pi/2)\bigl)$ such that $\theta(t)\equiv 0$ for a.e. $t\in I\setminus I_{\bar{n}}$ and $\theta_{|_{I_{\bar{n}}}}\in\mathcal{P}(I_{\bar{n}})$. Mollifying $\theta$ we find functions $\tilde{p}_n\in C^1(\bar{I};\mathbb{S}^1)$ converging to $p$ boundedly in measure (that is, $\tilde{p}_n$ converges in measure and is bounded with respect to $\norma{\cdot}_{\infty}$), and such that, after possibly renaming a bigger $\bar{n},$ the sequence satisfies $\tilde{p}_n(t) = B'(t)$ on $I\setminus I_{\bar{n}}$ and $\tilde{p}_n\cdot B'\ge \tilde{c}\text{ a.e. on }I_{\bar{n}}$ since the same property holds for $p$.\\
    3) Let $\tilde{J}\subseteq(1/4, 3/4)$ be an open interval on which $\sign(\lambda)$ is constant. Mollifying $\lambda$ we find a sequence $\tilde{\lambda}_n\in C^\infty(\bar{I})$ converging to $\lambda$ boundedly in measure and such that, after possibly relabeling the sequence, $(\tilde{\lambda}_n)_{|_{I\setminus I_n}} = 0$ and $|\tilde{\lambda}_n|\ge \tilde{c}$ on a suitable open set $J\subset\subset\tilde{J}$ for $n$ sufficiently large.\\
    4) We define $\tilde{M}^n\coloneqq\tilde{\lambda}_n \tilde{p}_n\otimes \tilde{p}_n\in C^1(\bar{I};\rsym)$ so that $A_{\tilde{M}^n}\to A_M$ boundedly in measure and $ \tilde{M}^{\B,n}_{22}\to M^\B_{22}$ in the same sense. Indeed, since $\tilde{p}_n\cdot B'$ and $p\cdot B'$ are bounded away from zero, we have
    \[|\tilde{M}^{\B, n}_{22}-M^\B_{22}| = \Bigl|\tilde{\lambda}_n|\tilde{p}_n\cdot N|^2 - \lambda_n|p\cdot N|^2\Bigl|,\]
    and we deduce the claim since $\tilde{\lambda}_n\to\lambda$ and $\tilde{p}_n\to p$ boundedly in measure.\\
    5) Since $A_M$ is admissible and nondegenerate on $J$, by Lemma \ref{lemma:perturbation_hornung} there exist a finite dimensional subspace $E\subseteq C_c^\infty(J;\U)$ and some $\hat{A}^n\in E$ converging to zero uniformly such that $A^n\coloneqq\hat{A}^n+A_{\tilde{M}^n}$ are admissible. We have that $A^n\in C^1(\bar{I};\tilde{\U})$ and $A^n\to A_M$ boundedly in measure. Moreover for every $x\in J$ it holds
    \[|\tilde{M}^{\B,n}_{11}| = |\tilde{\lambda}_n||\tilde{p}_n\cdot B'|^2\ge c,\]
    therefore $A^n_{13} = \tilde{M}^{\B,n}_{11}+\hat{A}^n_{13}$ is bounded away from zero on $J$ for $n$ large enough. Defining $M^n$ as in (\ref{eq:M_n}), and arguing as in the previous step, we conclude that $M^n\to M$ strongly in $L^2(I;\rsym).$\\
    6) We define $p_n(x) = \tilde{p}_n(x)$ for a.e. $x\in I\setminus J$, and otherwise we define it as in (\ref{eq:p_n}). Similarly, we set $\lambda_n(x) = \tilde{\lambda}_n(x)$ for a.e. $x\in I\setminus J$, and as in (\ref{eq:lambda_n}) if $x\in J$. Since $p_n$ and $\lambda_n$ are uniquely determined at every point $x$ such that $A^n_{13}(x)\neq 0,$ we have that $p_n = \tilde{p}_n$ and $\lambda_n = \tilde{\lambda}_n$ in an open neighborhood of $J$. Since in a neighborhood of $J$, $\sign (A^n_{13}) = \sign(\tilde{M}^{\B,n}_{11})$ for $n$ large, and $\sign(\tilde{M}^{\B,n}_{11})$is constant in a neighborhood of $J$, we conclude that $p_n\in C^1(\bar{I};\mathbb{S}^1)$ and $\lambda_n\in C^1(\bar{I}).$ This completes the proof.
\end{proof}

\section{The recovery sequence}\label{sec:recovery}
In this section we prove Theorem \ref{teo:gamma_convergence}. Since the proof of the liminf inequality is identical to that in \cite{mora2}, here we only prove Theorem \ref{teo:gamma_convergence}-(ii). We conclude the section with Remark \ref{rem:degenerate_case}, which addresses the case where $\A_0$ is not empty, while both $\bar{y} =  \chi(\ell, 0)$ and $\bar{R}^T =  (D_1(\ell)|D_2(\ell)|e_3)$ hold.

We precede the proof of the limsup inequality with the following Remark.

\begin{remark}\label{rem:degenerate_curves}
future references that if $\bigl(y, R^T\bigl)\in\A_0$ satisfies $d_1'\cdot d_3 = 0$ and $d_2'\cdot d_3 = 0$ a.e. in an open interval $J\subseteq I,$ and if $\bigl(y(\bar{x}), R^T(\bar{x})\bigl) = \bigl(\chi(\bar{x}, 0), (D_1(\bar{x})|D_2(\bar{x})|e_3)\bigl)$ for some $\bar{x}\in J$, then necessarily $y(\cdot) = \chi(\cdot, 0)$ and $(d_1|d_2|d_3) = (D_1|D_2|e_3)$ in $J$. In particular, if $J = I$, then $\bar{y} = \chi(\ell, 0)$ and $\bar{R}^T = (D_1(\ell)|D_2(\ell)|e_3)$.
	
	In fact since there exist two functions $\alpha_1(t),\,\alpha_2(t)$ such that $d_3\wedge d_1 = \alpha_1d_1+\alpha_2d_2$, we deduce that  
	\[d_3' = -(d_1'\cdot d_3)d_1-\bigl((d_3\wedge d_1)'\cdot d_3\bigl)(d_3\wedge d_1) = 0\;\text{ a.e. in }J.\]
	Now from $d_3(\bar{x}) = e_3$ it follows that $d_3\equiv e_3$ a.e. in $J$. Moreover from $d_1(\bar{x}) = D_1(\bar{x})$ and
	\[d_1'\cdot (e_3\wedge d_1) =  D_1'\cdot (e_3\wedge D_1),\]
	we deduce that $d_1 \equiv D_1$ in $J$ and $e_3\wedge d_1\equiv e_3\wedge D_1$ in $J$. Finally from (\ref{eq:limit_space}) and 
	\[d_2 = (d_2\cdot d_1)d_1+\bigl(d_2\cdot(e_3\wedge d_1)\bigl)e_3\wedge d_1\]
	it follows
	\[(d_2\cdot d_1)^2+\bigl(d_2\cdot(e_3\wedge d_1)\bigl)^2 = |d_2|^2 = |D_2|^2 = (D_1\cdot D_2)^2+\bigl(D_2\cdot(e_3\wedge D_1)\bigl)^2
	\]
	and since $d_2\cdot d_1 = D_2\cdot D_1$ we deduce that 
	\[\bigl(d_2\cdot(e_3\wedge d_1)\bigl)^2 = \bigl(D_2\cdot(e_3\wedge D_1)\bigl)^2\]
	a.e. in $J$. Since $d_3 = e_3 = \frac{d_1\wedge d_2}{|d_1\wedge d_2|} = \frac{D_1\wedge D_2}{|D_1\wedge D_2|}$ we conclude that $d_2 = D_2$ in $J$.
\end{remark}

\begin{proof}
    [Proof of Theorem \ref{teo:gamma_convergence}-(ii).]
    The proof closely follows the one presented in \cite{mora2}. Here we only sketch it, pointing out the differences. \\
    1) Given $(y, R) = \bigl(y, (d_1| d_2|d_3)^T\bigl)\in\A_0$ we set
        \[\mu\coloneqq d_1'\cdot d_3,\;\tau\coloneqq d_2'\cdot d_3,\;\text{ and }\;k\coloneqq d_1'\cdot (d_3\wedge d_1) = B'\cdot (e_3\wedge D_1).\]
        We define $D^\alpha\coloneqq D^{-T}e_\alpha$ so that $D^\alpha\cdot D_\beta = \delta_{\alpha\beta}$, and we set
        \begin{equation}\label{eq:MM}
        M\coloneqq\mu D^1\otimes D^1+\tau(D^1\otimes D^2+D^2\otimes D^1)+\gamma D^2\otimes D^2,
	    \end{equation}
        where $\gamma$ is chosen so that
    \[\bar{Q}(x_1, \mu, \tau) = \bigl(Q(M-D^{-T}\Pi^oD^{-1})+\alpha^+(\det M)^++\alpha^-(\det M)^-\bigl)\det D.\]
    As in the proof of \cite[Theorem 5-(ii)]{mora2} we check that $\gamma$ exists and belongs to $L^2(I).$\\
    2) We can prove that $R_{A_M} = (d_1|d_3\wedge d_1|d_3)^T$ with the same argument as in the proof of \cite[Theorem 5-(ii)]{mora2}, using that $(d_3\wedge d_1)'\cdot d_3 = A_{23} = MB'\cdot N$. In particular $A_M$ is admissible for the data $\bar{y}$ and $\bar{R}$. Moreover, since we assumed $(\bar{y}, \bar{R}^T)\neq (B(\ell), (D_1(\ell)|D_2(\ell)|e_3))$, by Remark \ref{rem:degenerate_curves} it follows that the map $A_M$ is nondegenerate.
    
    Thus, by Theorem \ref{teo:perturbation} there are $\lambda_n\in C^1(\bar{I}),\;p_n\in C^1(\bar{I};\mathbb{S}^1)$ such that setting $M^n \coloneqq \lambda_np_n\otimes p_n\in C^1(\bar{I};\rsym)$ the following hold:
            \begin{align}
                &p_n\cdot B'>0,\;\text{ everywhere on }\bar{I},\;p_n = B'\text{ near }\partial I, \,\text{ and }\lambda_n = 0 \text{ near }\partial I\text{ for every }n,\label{eq:punto_1}\\
                &A_{M^n}\text{ is admissible for the data }\y,\,\RR,\label{eq:punto_2}\\
                &M^n\rightharpoonup M\text{ weakly in }L^2(I;\rsym),\label{punto_3}\\
                &\int_IQ(M^n)\det(D)\dx = \F(M^n)\to \F(M) = \int_I\Bigl(Q(M)+\alpha^+(\det M)^++\alpha^-(\det M)^-\Bigl)\det(D)\dx.\label{punto_4}
            \end{align}
            From (\ref{punto_4}) it follows
            \begin{equation}\begin{aligned}\label{eq:conv_G_J}
                \int_I\bigl(Q(M^n)-&\K D^{-T}\Pi^oD^{-1}\cdot M^n+Q(D^{-T}\Pi^oD^{-1})\bigl)\det(D)\dx \to \\
                &\to\int_I\Bigl(Q(M-D^{-T}\Pi^oD^{-1})+\alpha^+(\det M)^++\alpha^-(\det M)^-\Bigl)(\det D)\dx =  J(y, R).
            \end{aligned}
            \end{equation}

            Since $\lambda_n$ and $p_n$ are regular up to the boundary we can extend them in an open interval $J$ containing $\bar{I}$. For instance we take $\lambda_n\equiv 0$ on $J\setminus I$ and $p_n(t) = \partial_1\chi(t, 0) = B'(t)$ in $J\setminus I$. Correspondingly, we can assume that $M^n$ is defined on $J$. In this way $\lambda_n \in C^1(\bar{J})$ and $p_n\in C^1(\bar{J};\mathbb{S}^1)$, therefore by \cite[Lemma 12]{mora2} for every $n$ there is $\eta_n>0$ such that the map
            \[\Phi^n:J\times (-\eta_n, \eta_n)\to\R^2,\;\;\Phi^n(t, s) = B(t)+sp_n^\perp(t)\]
        is a bi-Lipschitz homeomorphism onto the open set $U^n\coloneqq \Phi^n\bigl(J\times(-\eta_n, \eta_n)\bigl)$.

    3) We let $R^n\coloneqq R_{A_{M^n}}$, which is defined on $J$ for every $n$, for $i=1, 2, 3$ we set $d_i^n\coloneqq (R^n)^Te_i$, and for $t\in J$ we define 
    \[\beta^n(t)\coloneqq \int_0^td^n_1(s)\ds.\]
    Then, by \cite[Proposition 13]{mora2} for each $n$ there exist an isometry $u^n:U^n\to\R^3$ such that $u^n\circ B = \beta^n$ and $\Pi_{u^n}\circ B = M^n$ on $J$, and \begin{equation}\label{eq:grad_u}
        (\nabla u^n)\circ \bigl(B(t)+sp^\perp(t)\bigl)= (R^n(t))^Te_1\otimes B'(t)+(R^n(t))^Te_2\otimes N(t)\quad \text{on }U^n.
    \end{equation}
    4) Since for $\e$ sufficiently small $S_\e = \chi_\e(\Omega)\subseteq U^n$ we can define
    \[y^n_\e\coloneqq u^n\circ\chi_\e.\]
    Following the same strategy used in the proof in \cite{mora2}, we can check that there is a subsequence $y^n_{\e_n}$ such that
    \[y^n_{\e_n}\rightharpoonup y\text{ weakly in }W^{2, 2}(\Omega;\R^3)\text{ and }\nabla_{\e_n}y^n_{\e_n}\rightharpoonup (d_1|d_2)\text{ weakly in }W^{1, 2}(\Omega;\R^3),\]
    and 
    \[\limsup_{\e_n\to 0}J_{\e_n}(y^n_{\e_n})\le J(y, R).\]
    5) To conclude, we need to prove that $y^n_{\e_n}\in\A_{\e_n}^\Omega$ for all $n$ sufficiently large.

    We observe that for any $\e$ it holds \begin{equation*}
        \begin{aligned}
        y^n_{\e}(0, 0) &= u^n\circ\chi_{\e}(0, 0) = u^n\circ B(0) = \beta^n(0)= 0,\\
        y^n_{\e}(\ell, 0) &= u^n\circ\chi_{\e}(\ell, 0) = u^n\circ B(\ell) = \beta^n(\ell)= \int_0^\ell d^n_1(s)\ds = \y,
        \end{aligned}
        \end{equation*}
    where last equality holds since $A_{M^n}$ is admissible.

    Moreover by (\ref{eq:punto_1}) we have $M^n(t) = 0$ in a neighborhood of $\partial I$ contained in $J$, and, in particular, $A_{M^n} = k(e_1\otimes e_2-e_2\otimes e_1)$ near $\partial I$. Since by definition we have $k = B''\cdot N$, we deduce that also $(B'(t)|N(t)|e_3)$ is a solution of the Cauchy problem \eqref{eq:cauchy_problem} near $\partial I$, with $A_{M^n} = k(e_1\otimes e_2-e_2\otimes e_1)$ in place of $A$. As a consequence there exists $\delta_n>0$ such that $(R^n(t))^T = (B'(t)|N(t)|e_3)$ for every $t\in(-\delta_n, \delta_n)\subseteq J$. By \eqref{eq:grad_u} $(\nabla u^n)\circ \Phi^n(t, s) = (e_1|e_2)$ for every $(t, s)\in (-\delta_n, \delta_n)\times(-\eta_n, \eta_n),$ which is an open neighborhood of $(0, 0)$. Therefore for $\e$ sufficiently small \[\chi_\e\Bigl(\{0\}\times(-1/2, 1/2)\Bigl)\subset\Phi_n\Bigl((-\delta_n, \delta_n)\times(-\eta_n, \eta_n)\Bigl),\]and, in particular, we deduce $(\nabla u^n)\circ\chi_\e(0, x_2) = (e_1|e_2)$ for every $x_2\in(-1/2, 1/2)$. It follows that we can choose the sequence $\e_n$ so that 
    \[(\nabla y^n_{\e_n})(0, x_2) = (\nabla u^n)\circ\chi_{\e_n}(0, x_2)\cdot\nabla\chi_{\e_n}(0, x_2) = \nabla\chi_{\e_n}(0, x_2)\]
    for every $x_2\in (-1/2, 1/2)$, which proves the second condition of (\ref{rescaled_boundary_cond_2}). To prove the third condition we observe that
    the map 
    \[\tilde{R}^T\coloneqq \bar{R}^T
    \begin{pmatrix}
        B'|N|e_3
    \end{pmatrix}
    \]
    satisfies $\tilde{R}' = \tilde{A}\tilde{R}$ where
    \[\tilde{A} = \begin{pmatrix}
        0&k&0\\-k&0&0\\0&0&0
    \end{pmatrix},
    \]
    with boundary datum $\tilde{R}^T(\ell) = \bar{R}^T\bigl(B'(\ell)|N(\ell)|e_3\bigl).$
    Since $\tilde{A} = A_{M^n}$ in a neighborhood of $\ell$ and since $\tilde{R}_n(1) = \tilde{R}(1)$, then for every $n$ there is $\delta_n>0$ such that $\tilde{R}(t) = R^n(t)$ for every $t\in(\ell-\delta_n, \ell+\delta_n).$ We can conclude the proof as before, observing that then
    $(\nabla u^n)\circ\Phi^n(t, s) = (\bar{d}_1|\bar{d}_2)$ for every $(t, s)\in(-\delta_n, \delta_n)\times(-\eta_n, \eta_n).$
\end{proof}

\begin{remark}\label{rem:degenerate_case}
	In this remark, we address the case where $\mathcal{A}_0$ is non-empty, while both 
	\begin{equation}\label{eq:last}
		\bar{y} = \chi(\ell, 0) \quad \text{and} \quad \bar{R}^T = (D_1(\ell)|D_2(\ell)|e_3)
	\end{equation}
	hold. 
	
	First of all, we observe that the liminf inequality of Theorem \ref{teo:gamma_convergence} holds true with no changes, so we only focus on the limsup inequality. 
	Let the map $M$ be defined as in \eqref{eq:MM}. Following Step 2) of the proof of Theorem \ref{teo:gamma_convergence}, the matrix $A_M$ is admissible for the boundary data $(\bar{y}, \bar{R})$. If $A_M$ is non-degenerate, the proof of the limsup inequality proceeds as previously established.
	
	Suppose instead that $A_M$ is degenerate. From Remark \ref{rem:degenerate_curves}, we deduce that $(y, R^T) = (B, (D_1|D_2|e_3))$. We distinguish two cases based on whether $\Pi_0$ vanishes.
	
	First, assume $\Pi_0 = 0$ on $I$. It follows from \eqref{def:limit_functional} that $M = 0$ and $J(y, R^T) = 0$. Interpreting $M$ as the second fundamental form of all the isometries of the recovery sequence, we define $u_\varepsilon(x, y) = (x, y, 0)$, which yields the recovery sequence $y_\varepsilon := \chi_\varepsilon$ for all $\varepsilon$. In particular, in this instance the limit functional is consistently given by \eqref{def:limit_functional}.
	
	Conversely, suppose $\Pi_0$ is not identically zero. In this case, explicitly providing a recovery sequence remains an open problem, the main geometric obstruction is as follows. For this state, the limit energy is minimized by the ``second fundamental form'' $M = \gamma D^2 \otimes D^2$, where $\gamma \in L^2(I)$ is not necessarily vanishing. Theorem \ref{teo:perturbation} cannot be applied directly due to the degeneracy of $A_M$. 
	
	A natural strategy would be to construct a sequence of isometries such that $M$ serves as the exact second fundamental form along the centerline $B$, mirroring the approach used for the case $\Pi_0 = 0$. However, since any smooth isometric immersion of a flat domain must be a ruled surface, this construction requires extending the centerline $B$ along straight rulings whose projections onto the $xy$-plane are oriented along the eigenvector corresponding to the zero eigenvalue of $M$.
	
	In our setting, $M = \gamma D^2 \otimes D^2$, which implies that this eigenvector is $D_1$. Since $D_1 = B'$, the projections of these rulings are perfectly tangent to the centerline, a configuration that prevents the construction of a valid sequence of non-singular isometries. In particular, \cite[Proposition 13]{mora2} is inapplicable for constructing an admissible isometry. Indeed that proposition requires the transversality condition $p \cdot B' \neq 0$, which is violated in our context because $p$ corresponds to $D^2$ and, as noted, $D^2 \cdot B' = 0$.
\end{remark}

\section*{Acknowledgments}
I would like to express my gratitude to my supervisor, Prof. Maria Giovanna Mora, for her guidance and for her careful reading of the manuscript drafts.

\nocite{*} 

\printbibliography

\vspace{1cm}
\noindent
(Giovanni Savar\'e)
\small Technische Universit\"at M\"unchen, Boltzmannstrasse 3, 85748 Garching, Germany\\
\small \texttt{giovanni.savare@tum.de}

\end{document}